\documentclass[11pt, reqno]{article}

\usepackage[utf8]{inputenc}
\usepackage[T1]{fontenc}

\usepackage{setspace}
\usepackage[margin=1in]{geometry}
\usepackage{parskip}
\usepackage[sc]{mathpazo}
\usepackage[T1]{eulervm}
\usepackage[scaled]{helvet}

\usepackage{amsmath,amsthm, amssymb, amsfonts}
\usepackage{mathtools}
\usepackage{graphicx}
\usepackage[hidelinks]{hyperref}

\theoremstyle{plain}
\newtheorem{theorem}{Theorem}
\newtheorem{lemma}[theorem]{Lemma}
\newtheorem{proposition}[theorem]{Proposition}
\newtheorem{corollary}[theorem]{Corollary}

\theoremstyle{definition}

\newcommand{\ind}[1]{\mathbf{1}_{\{ #1 \}}}
\newcommand{\dt}[1]{\mathrm{det}\left(#1\right)}
\newcommand{\R}{\mathbb{R}}
\newcommand{\Z}{\mathbb{Z}}
\newcommand{\1}{\mathbf{1}}
\newcommand{\Jext}{\mathcal{K}^{\mathrm{ext}}}

\title{The transition between order to disorder in the Sepp\"al\"ainen-Johansson model}
\author{Mustazee Rahman\thanks{\textsc{Department of Mathematical Sciences, Durham University}. \textit{Email}: \texttt{mustazee@gmail.com}}}
\date{}

\begin{document}

\maketitle

\begin{abstract}
    \noindent We study the Sepp\"al\"ainen-Johansson model of first-passage percolation along a region of the plane where it moves
    from an ordered to a disordered phase, represented by a 2nd order discontinuity in its limit shape function.
    The fluctuations in this transition region are of order one, and we determine that certain limiting statistics are described by Fredholm determinants and are closely related to the largest eigenvalue of the GUE.
\end{abstract}

\section{Introduction} \label{sec:intro}

Sepp\"al\"ainen \cite{Sep} introduced a simplified model of planar first-passage percolation as follows.
Let $B_{i,j}$ for $(i,j) \in \Z^2$ be a family of independent, $\mathrm{Bernoulli(p)}$ random variables.
Direct every vertical edge of $\Z^2$ upward and every horizontal edge rightward. Assign weights to the
directed edges of $\Z^2$ as follows.
\begin{itemize}
    \item Assign to the vertical edge $(i,j) \to (i,j+1)$ a weight of zero.
    \item Assign to the horizontal edge $(i,j) \to (i+1,j)$ the random weight $B_{i,j}$.
\end{itemize}
A directed path $\pi :(a,b) \to (c,d)$ is a path that traverses the edges of $\Z^2$ according to their direction.
So such paths will move only up/right; e.g., $(0,0) \stackrel{up}{\to} (0,1) \stackrel{right}{\to} (1,1) \stackrel{up}{\to} (1,2)$ is a directed path.
The weight of a directed path $\pi : (a,b) \to (c,d)$ is the sum of all edge weights along the path:
$$ w_B(\pi) = \sum_{e \in \pi} w(e).$$
The first passage time from $(a,b)$ to $(c,d)$ is
\begin{equation} \label{eqn:Fptime}
    F_p(a,b;c,d) = \min_{\pi: (a,b) \to (c,d)}\, w_B(\pi).
\end{equation}

This model can be recast as a last-passage percolation model, as observed by Johansson \cite{Joplanc}.
Let $\bar{B}_{i,j} = 1 - B_{i,j}$ so that $\bar{B}_{i,j} \sim \mathrm{Bernoulli}(1-p)$.
Suppose all horizontal edges of $\Z^2$ now have corresponding weights $\bar{B}_{i,j}$ instead of $B_{i,j}$.
Let the weight $w_{\bar{B}}(\pi)$ of a directed path $\pi$ be the sum of edge weights along it for the new weights $\bar{B}_{i,j}$.
It is clear that if $\pi$ is a directed path from $(a,b) \to (c,d)$, then
$$ w_{\bar{B}}(\pi) = c-a - w_{B}(\pi).$$
The last passage time from $(a,b)$ to $(c,d)$ is
\begin{equation*} \label{eqn:Lptime}
    L_{1-p}(a,b;c,d) = \max_{\pi: (a,b) \to (c,d)} \, w_{\bar{B}}(\pi).
\end{equation*}
Owing to the relation between $w_B$ and $w_{\bar{B}}$ above, it is clear that
\begin{equation*} \label{eqn:FpLprelation}
    L_{1-p}(a,b;c,d) = c-a - F_p(a,b;c,d).
\end{equation*}
This model is thus called the Sepp\"al\"ainen-Johansson (SJ) model.

In this article, we study the SJ model at $p = \frac{1}{2}$.
We shall write $F(m,n)$ for $F_{1/2}(0,0;m,n)$.
By Kingman's subadditive ergodic theorem, there is a function $\mu$ (called the "limit shape" or "time constant") such that
$$ \frac{F(\lfloor \rho n \rfloor, n)}{n} \stackrel{a.s.}{\longrightarrow} \mu(\rho), \quad \rho \geq 0.$$
Sepp\"al\"ainen \cite{Sep} obtained the formula for the limit shape:
\begin{equation} \label{eqn:limitshape}
\mu(\rho) = \begin{cases}
    \frac{(\sqrt{\rho}-1)^2}{2}, & \rho \geq 1;\\
    0, & \rho \in [0,1]
\end{cases}.
\end{equation}
Limit shapes of generalisations of the SJ model are also known; see, for example, \cite{BC, Mar, OC, Ran}.

The function $\mu(\rho)$ is continuously differentiable but not twice differentiable at $\rho = 1$.
In this article, we shall investigate the behaviour of the first passage time $F(\rho n, n)$ at $\rho = 1$.
It is not hard to see that when $\rho < 1$, there exists a random integer $N$ such that $F(\rho n,n) = 0$ for all $n \geq N$.
We can therefore think of $\rho < 1$ as being the "ordered" phase of the SJ model.
In the "disordered" phase $\rho > 1$, Johansson \cite{Joplanc} proved that there exists a $\sigma_{\rho} > 0$ such that
\begin{equation} \label{eqn:JoTW}
\lim_n \mathbb{P}\left (\frac{\mu(\rho)n-F(\lfloor \rho n \rfloor,n)}{\sigma_{\rho} n^{1/3}} \leq \xi\right ) = F_2(\xi),
\end{equation}
where $F_2$ is the distribution function of the GUE Tracy-Widom law \cite{TW}.
The quantity $\sigma_{\rho} \sim (\sqrt{\rho}-1)^{2/3}$ as $\rho \to 1$. Therefore, the behaviour of the model
should change drastically at $\rho = 1$, which motivates our investigation. When $\rho > 1$,
Dauvergne, Nica and Vir\'ag \cite{DNV} have shown that there are constants $c_{\rho}, c'_{\rho} >0$ such that the process
\begin{equation} \label{eqn:DNV}
t \mapsto \frac{\mu(\rho)n+ c'_{\rho} n^{2/3}t-F(\lfloor \rho n + c_{\rho}n^{2/3}t \rfloor,n)}{\sigma_{\rho} n^{1/3}} \to \mathcal{A}(t)
\end{equation}
in law, where $\mathcal{A}(t)$ is the (parabolic) Airy-two process \cite{JoDPG, PS}.

The SJ model is also related to the Krawtchouk orthogonal polynomial ensemble and, through this, to certain statistics in a uniform random tiling of the Aztec Diamond.
See \cite{Joplanc} for more on this relationship. It is also related to the discrete-time totally asymmetric simple exclusion process with sequential updates; see \cite{DW} Case B.

When $\rho = 1$, $F(n,n)$ fluctuates at order one. In fact, it converges
to an integer-valued random variable whose c.d.f.~is related to the largest eigenvalue from the Gaussian Unitary Ensemble.
It also coincides with a certain point-to-set distance in the Brownian web distance model introduced by Vet\H{o} and Vir\'ag \cite{VV}.
Our results provide Fredholm determinant formulas for the c.d.f.s of such distances from their model; see also the forthcoming article \cite{RVV}.

\section{Results} \label{sec:results}
Define the following integral kernel $J$ acting on $\ell^2(\Z)$:
\begin{equation} \label{eqn:J}
    J(x,y) = \frac{1}{(2 \pi \mathbf{i})^2} \oint_{d+\mathbf{i}\R} dz_1 \oint_{|z_0|=1} dz_0\, e^{\frac{1}{2}(z_1^2-z_0^2)} \, \frac{z_1^{y}}{z_0^{x+1}} \cdot \frac{1}{z_1-z_0}.
\end{equation}
Here $d > 1$; the vertical line $d + \mathbf{i}\R$ is oriented upwards and the circle $|z_1|=1$ is oriented counter-clockwise.

\begin{theorem} \label{thm:main}
    The random variable $F(n,n) = F_{1/2}(0,0;n,n)$ from \eqref{eqn:Fptime} satisfies, for every $a \in \Z$,
    $$ \lim_n\, \mathbb{P}(F(n,n) \geq a) = \det\left (I- \chi_{[0,a)} J \chi_{[0,a)}\right )_{\ell^2(\Z)}.$$
    Here $\chi_{[0,a)}(x) = \ind{x \in [0,a)}$ is the projection onto the set $[0,a) = \{0,\cdots,a-1\}$.
    In particular, $\chi_{[0,a)} J \chi_{[0,a)}$ is non-zero only if $a > 0$ and the Fredholm determinant is a finite $a \times a$ matrix determinant.
    Furthermore, $a \in \Z \mapsto 1- \det(I-\chi_{[0,a)} J \chi_{[0,a)})_{\ell^2(\Z)}$ is a cumulative distribution function.
\end{theorem}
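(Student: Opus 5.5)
The plan is to start from an exact finite-$n$ determinantal formula and then take a saddle-point limit. By the relation $L_{1/2}(0,0;n,n) = n - F(n,n)$ from Section~\ref{sec:intro}, we have $\mathbb{P}(F(n,n)\ge a) = \mathbb{P}(L_{1/2}(0,0;n,n)\le n-a)$. The last passage time with Bernoulli weights on horizontal edges and zero weights on vertical edges is, by the dual RSK correspondence (Johansson \cite{Joplanc}), the first row $\lambda_1$ of a random partition distributed as a Schur measure. One specialisation of this Schur measure is $n$ Bernoulli (dual) variables and the other is $n$ ordinary variables, each with parameter $1/2$; equivalently, $\lambda_1$ is the top particle of a Krawtchouk ensemble. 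Schur measures are determinantal, so
$$\mathbb{P}(\lambda_1 \le n-a) = \det(I - K_n)_{\ell^2(\{n-a,n-a+1,\dots\})},$$
where $K_n$ is a double contour integral kernel of the form
$$K_n(x,y)=\frac{1}{(2\pi\mathbf{i})^2}\oint\oint \frac{G_n(z_1)}{G_n(z_0)}\,\frac{z_1^{y}}{z_0^{x+1}}\,\frac{dz_1\,dz_0}{z_1-z_0},$$
and $G_n$ is an explicit ratio of $n$-th powers of linear factors coming from the two specialisations. I will then reindex by $x \mapsto n-1-x$ (possibly after conjugating by $z\mapsto 1/z$), so that the domain $\{n-a,\dots\}$ becomes $\{\dots,a-1\}$ in the new variable. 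In this variable the kernel has the shape of \eqref{eqn:J}, with $e^{\frac12(z_1^2-z_0^2)}$ replaced by $G_n(z_1)/G_n(z_0)$ times suitable powers.

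Next comes the asymptotic analysis. At $\rho=1$ the phase function $\log G_n$ has a degenerate critical point; this reflects the second-order discontinuity of $\mu$ in \eqref{eqn:limitshape}, where the two critical points of the $\rho>1$ Tracy--Widom analysis coalesce. Expanding to second order around that point should give $G_n(1+z/\sqrt n)$, up to gauge factors and the $n$-th power prefactors absorbed by the reindexing, converging to $e^{z^2/2}$ at Gaussian scale. Substituting $z_i \mapsto 1+ z_i/\sqrt n$, the monomials $z^{x}$ for fixed $x$ become, after the same gauge conjugation, the monomials in \eqref{eqn:J}. The $z_0$ contour becomes the unit circle and the $z_1$ contour becomes the line $d+\mathbf{i}\R$, with the ordering $|z_0|<\Re z_1$ preserved so that $1/(z_1-z_0)$ is harmless. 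I expect the main obstacle to be choosing the contours for $K_n$ so that they deform to steepest-descent-type contours with uniform Gaussian decay in $n$, which gives dominated convergence of $K_n(x,y)\to J(x,y)$ for fixed $x,y$.

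The final reduction uses the structure of $J$. For $x<0$ the factor $z_0^{-x-1}$ is analytic inside $|z_0|=1$, so $J(x,y)=0$; the same holds for $K_n$ after reindexing, by the identical residue argument. Hence only the rows with $x\in[0,a)$ survive, and the determinant reduces to the finite matrix $\det(I-\chi_{[0,a)}K_n\chi_{[0,a)})$. Entrywise convergence then proves the limit, and for $a\le 0$ it gives $1$, consistent with $F\ge 0$.

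It remains to show that $a\mapsto 1-\det(I-\chi_{[0,a)}J\chi_{[0,a)})$ is a cumulative distribution function. It is a pointwise limit of the functions $a\mapsto\mathbb{P}(F(n,n)<a)$, so it is nondecreasing, takes values in $[0,1]$, and equals $0$ for $a\le 0$. The only missing property is that it tends to $1$ as $a\to\infty$, i.e.\ that the family $F(n,n)$ is tight. I would obtain this from the finite-$n$ formula: bound $\mathbb{P}(\lambda_1\le n-a)\le \sum_{x\ge n-a}$ of the one-point density, using the uniform Gaussian bounds on $K_n$ from the steepest-descent step. An alternative is to identify the limit with a GUE largest-eigenvalue statistic, as indicated in Section~\ref{sec:intro}, whose tails are explicit.
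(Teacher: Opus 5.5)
Your proposal has two genuine gaps. The point about which you do the saddle-point expansion is wrong, and your main route to the last c.d.f.\ property bounds the wrong tail.

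\textbf{The expansion point.} You expand around $z=1$ and substitute $z_i\mapsto 1+z_i/\sqrt n$. At $\rho=1$ the relevant critical point is the origin, which is exactly where the monomials $\zeta_1^{y}\zeta_0^{-x-1}$ are singular.

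In the finite-$n$ kernel of Proposition~\ref{prop:BF} (with $s=t=n$) the $n$-dependence is $\bigl((1-\zeta_0^2)/(1-\zeta_1^2)\bigr)^n$. Its phase has a \emph{nondegenerate} critical point at $\zeta=0$. The same holds for your Schur/Krawtchouk kernel once the factor $z^{\pm n}$ is absorbed into the monomials. For $\rho>1$ the double critical point sits at $(\sqrt\rho-1)/(\sqrt\rho+1)$. As $\rho\downarrow 1$ it runs into the origin, where the coefficient $\mu(\rho)$ of the $\log\zeta$ term vanishes. A genuinely degenerate point would give $n^{-1/3}$ scaling and Airy functions, not a Gaussian.

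Under your substitution, $(1+z_1/\sqrt n)^{y}(1+z_0/\sqrt n)^{-x-1}\to 1$, so all $x,y$-dependence is lost. No conjugation $c(x)/c(y)$ can restore $z_1^y/z_0^{x+1}$. Also, a $z_0$-contour shrinking towards $1$ no longer encloses the pole at $0$.

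The paper instead sets $\zeta_i=z_i/\sqrt{2n}$. It takes $\gamma_0$ to be the circle of radius $1/\sqrt{2n}$ about $0$, and $\gamma_1$ the circle about $1$ of radius $1-d/\sqrt{2n}$, which is locally the line $\Re z_1=d$. Then the monomials and $d\zeta_0\,d\zeta_1/(\zeta_0-\zeta_1)$ produce only the conjugation $(\sqrt{2n})^{x-y}$, and $(1-z^2/2n)^{\pm n}\to e^{\mp z^2/2}$. The step you flag as the main obstacle becomes Lemma~\ref{lem:mainglobal}: $|(1-\zeta_1^2)^{-n}|\le e^{-\sigma_1^2/(5\pi^2)+d^2}$ on $\gamma_1$, which gives dominated convergence.

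\textbf{The c.d.f.\ property.} The bound of $\mathbb{P}(\lambda_1\le n-a)$ by the sum of the one-point density over $x\ge n-a$ does not hold. A first-moment bound controls the probability that a set \emph{contains} a particle, i.e.\ $\mathbb{P}(\lambda_1>n-a)$. But $\{F(n,n)\ge a\}$ is the event that the set is empty, so this cannot give $\lim_a\lim_n\mathbb{P}(F(n,n)\ge a)=0$.

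The paper gets this through the GUE identification you list only as an alternative, and that identification is the real content:
\begin{itemize}
\item Writing $(z_1-z_0)^{-1}=\int_0^\infty e^{-\lambda(z_1-z_0)}\,d\lambda$ gives $J(x,y)=\int_0^\infty \frac{H_x(\lambda)}{x!}H_y(\lambda)\frac{e^{-\lambda^2/2}}{\sqrt{2\pi}}\,d\lambda$.
\item Hermite orthogonality turns $I-J$ into the same integral over $(-\infty,0]$.
\item Cauchy--Binet identifies the $a\times a$ determinant with $\mathbb{P}(\lambda_{\max}(\mathbf G_{a\times a})\le 0)$, which tends to $0$ by the semicircle law.
\end{itemize}

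The rest of your outline is fine: starting from a Schur/Krawtchouk formula instead of the non-colliding-walk formula, reducing to an $a\times a$ determinant by analyticity, and the monotonicity and value $1$ for $a\le 0$.
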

In fact, we will demonstrate:
\begin{corollary} \label{cor:main}
    For every integer $a \geq 1$,
    $$\det\left (I- \chi_{[0,a)} J \chi_{[0,a)}\right )_{\ell^2(\Z)} = \mathbb{P}\left( \lambda_{max}(\mathbf{G}_{a\times a})\leq 0 \right),$$
    where $\lambda_{max}(\mathbf{G}_{a \times a})$ is the largest eigenvalue of an $a \times a$ matrix from the Gaussian Unitary Ensemble.
    Specifically, $\mathbf{G}_{a \times a} = \frac{W + W^*}{\sqrt{2}}$ where the $a \times a$ matrix $W$ has independent entries $W_{i,j} \sim \mathbf{N}_{\mathbb{C}}(0,1)$.
\end{corollary}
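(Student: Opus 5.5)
The plan is to compute the $a\times a$ matrix $\chi_{[0,a)}J\chi_{[0,a)}$ explicitly in terms of Hermite polynomials, and then recognize $\det(I-\chi_{[0,a)}J\chi_{[0,a)})$ as the GUE gap probability on $(0,\infty)$. That probability is the Christoffel--Darboux/Fredholm determinant of the Hermite kernel, and via Andreief it is an $a\times a$ determinant of moments.

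\textbf{Step 1: evaluate the $z_0$-integral.} On $|z_0|=1$ we have $|z_1|>|z_0|$, so we expand $\frac{1}{z_1-z_0}=\sum_{k\ge0} z_0^k z_1^{-k-1}$. We also expand $e^{-z_0^2/2}=\sum_m (-1)^m z_0^{2m}/(2^m m!)$. The residue at $z_0=0$ picks out the coefficient of $z_0^{x}$ in $e^{-z_0^2/2}\sum_k z_0^k z_1^{-k-1}$. This gives a polynomial $p_x(1/z_1)$ of degree $x$, and it vanishes for $x<0$. Hence $J(x,y)=0$ for $x<0$, and
$$J(x,y)=\frac{1}{2\pi\mathbf{i}}\int_{d+\mathbf{i}\R} e^{z_1^2/2} z_1^{y}\,\sum_{k=0}^{x} c_{x-k}\, z_1^{-k-1}\,dz_1,$$
where $c_j$ is the coefficient of $z_0^j$ in $e^{-z_0^2/2}$. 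Each term is an inverse-Laplace/Gaussian integral $\frac{1}{2\pi\mathbf{i}}\int e^{z^2/2}z^{n}dz$. For $n\ge0$ the contour can be moved to $\mathbf{i}\R$ and the integral equals $\frac{1}{2\pi}\int e^{-t^2/2}(\mathbf{i}t)^n dt$, i.e. $\mathbf{i}^n$ times a Gaussian moment divided by $\sqrt{2\pi}$. For $n<0$ the integral equals $\frac{1}{2\pi\mathbf{i}}\int e^{z^2/2}z^{n}dz$ with the pole at $0$ to the left. Writing $z^{-m}=\frac{1}{(m-1)!}\int_0^\infty s^{m-1}e^{-sz}ds$ for $\Re z>0$ and integrating in $z$ gives
$$\frac{1}{(m-1)!}\int_0^\infty s^{m-1}\frac{e^{-s^2/2}}{\sqrt{2\pi}}\,ds$$
(up to a sign or an $\mathbf{i}$ factor to be tracked).

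\textbf{Step 2: write the matrix in a unified form.} Combining the two cases, $J(x,y)$ for $0\le x,y<a$ should take the form
$$\int_0^\infty q_x(s)\,s^{y}\,\frac{e^{-s^2/2}}{\sqrt{2\pi}}\,ds\ \text{(plus a full-line part)},$$
where the $q_x$ are built from the Hermite-type polynomial $\sum_k c_{x-k} s^{k}/k!$. I expect that the combination $\sum_k c_{x-k}(\cdot)$, together with the full-line Gaussian moments, produces exactly $\delta_{xy}$ minus a half-line integral. By orthogonality of the Hermite polynomials $He_x$ under the Gaussian weight, the coefficients $c_j$ are precisely those expressing monomials in the dual basis. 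So the target identity is
$$(I-J)(x,y)=\int_{-\infty}^{0} \frac{He_x(s)}{x!}\,He_y(s)\,\frac{e^{-s^2/2}}{\sqrt{2\pi}}\,ds\qquad (0\le x,y<a),$$
possibly after conjugating by a diagonal matrix or a triangular change of basis, neither of which changes the determinant.

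\textbf{Step 3: identify the determinant with the GUE.} For the GUE with density $\propto e^{-\mathrm{tr}\,G^2/2}$, which matches the normalization $(W+W^*)/\sqrt2$ with the stated $W$, the Weyl integration formula and Andreief's identity give
$$\mathbb{P}(\lambda_{max}\le0)=\det\left[\int_{-\infty}^0 \frac{He_x(s)He_y(s)}{x!}\,\frac{e^{-s^2/2}}{\sqrt{2\pi}}\,ds\right]_{x,y=0}^{a-1},$$
using the orthonormal functions $He_x/\sqrt{x!}$. This proves the corollary. The finiteness claim in Theorem~\ref{thm:main} already follows from the vanishing of $J(x,y)$ for $x<0$ in Step 1.

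\textbf{Main obstacle.} The hard part is Step 2: carrying out the bookkeeping of signs and factors of $\mathbf{i}$ so that the split between the full-line part and the half-line part comes out exactly as $\delta_{xy}$ minus the $(-\infty,0]$ integral. To check this I would first verify $a=1$, where the claim is $J(0,0)=1/2$. The $z_0$-residue is $z_1^{-1}$, and $\frac{1}{2\pi\mathbf{i}}\int_{d+\mathbf{i}\R} e^{z^2/2}z^{-1}dz=\tfrac12$ by symmetry (the contour can be shifted to $\mathbf{i}\R$, picking up half the residue at $0$). This matches $\mathbb{P}(N(0,1)\le0)=\tfrac12$, and I would calibrate the general signs from this case. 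A cleaner alternative route is to insert $\frac{1}{z_1-z_0}=\int_0^\infty e^{-s(z_1-z_0)}ds$ directly into \eqref{eqn:J}, which turns both contour integrals into Hermite functions evaluated at $s$.
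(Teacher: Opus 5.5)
Your overall architecture matches the paper's. You express $J$ via Hermite polynomials, use full-line orthogonality to turn $I-J$ into a half-line Gram matrix, and apply Andreief (the paper's Cauchy--Binet step) against the GUE eigenvalue density $\propto \Delta^2 e^{-\sum_j \lambda_j^2/2}$. Your Step 3 and your normalisation of $\mathbf{G}_{a\times a}$ are correct.

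The gap is Step 2, which in your main route is only conjectured. Splitting $\frac{1}{2\pi\mathbf{i}}\int_{d+\mathbf{i}\R}e^{z^2/2}z^{y-k-1}\,dz$ according to the sign of $y-k-1$ is what generates the extra Gaussian-moment terms, with their factors $\mathbf{i}^n$, that you call a full-line part. You give no argument that these recombine into $\ind{x=y}$ minus a half-line integral. Also, there is nothing to track in your negative-power formula: it is already exact.

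The missing computation is the ``cleaner alternative'' in your last sentence, which is precisely the paper's proof. In your own notation, do not split. Instead write $z_1^{-k-1}=\int_0^\infty \frac{s^k}{k!}e^{-sz_1}\,ds$ for every $0\le k\le x$ (valid since $\Re z_1=d>0$), keeping $z_1^{y}$ inside the $z_1$-integral. Then
\[
\frac{1}{2\pi\mathbf{i}}\int_{d+\mathbf{i}\R}e^{z^2/2-sz}z^{y}\,dz=(-\partial_s)^y w(s)=H_y(s)w(s),
\]
with $w(s)=e^{-s^2/2}/\sqrt{2\pi}$. Meanwhile $\sum_{k=0}^{x}c_{x-k}\,s^k/k!$ is the coefficient of $z^x$ in $e^{-z^2/2+sz}$, namely $H_x(s)/x!$, by the Hermite generating function.

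So $J(x,y)=\int_0^\infty \frac{H_x(s)}{x!}H_y(s)w(s)\,ds$ for $x,y\ge0$. This is a single half-line integral, with $H_y(s)$ in place of $s^y$ and no full-line part. Then $\int_{\R}\frac{H_xH_y}{x!}w=\ind{x=y}$ immediately gives $(I-J)(x,y)=\int_{-\infty}^0\frac{H_x(s)H_y(s)}{x!}w(s)\,ds$, which is exactly the matrix your Step 3 needs. (The paper's $H_n$ are your $He_n$.)
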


We will also see that
\begin{equation} \label{eqn:JKrelation}
\det\left (I- \chi_{[0,a)} J \chi_{[0,a)}\right )_{\ell^2(\Z)} = \det \left( I - K_a\right)_{L^2([0,\infty))},
\end{equation}
where the integral kernel $K_a$ is
$$ K_a(u,v) = \frac{1}{(2 \pi \mathbf{i})^2} \oint_{d+\mathbf{i}\R} dz_1 \oint_{|z_0|=1} dz_0\,
e^{\frac{1}{2}(z_1^2-z_0^2)+uz_0-vz_1} \Big ( \frac{z_1}{z_0}\Big)^a\, \frac{1}{z_1-z_0}.$$
It is possible to express $K_a$ in terms of Hermite polynomials:
\begin{equation}
    K_a(u,v) = \frac{\ind{a \geq 1}}{\sqrt{2\pi}(a-1)!}\int_0^{\infty} d\lambda\, H_a(v+\lambda) H_{a-1}(u+\lambda) e^{-\frac{(v+\lambda)^2}{2}},
\end{equation}
where $H_n(x) = e^{x^2/2} \cdot  \big (- \frac{d}{dx}\big)^n e^{-x^2/2}$ are the Hermite polynomials.

At the process level, $F(n+t,n)$ is non-trivially correlated to $F(n,n)$ when $t \sim \sqrt{n}$. This contrasts with the case $\rho > 1$, as exhibited in \eqref{eqn:DNV}, where non-trivial correlations arise when $t \sim n^{2/3}$.
\begin{theorem} \label{thm:process}
    For $t \in \R$, define the process $\mathcal F_n(t) = F(n+\lfloor \sqrt{2n}\, t\rfloor,n)$. Given times $t_1 < t_2 < \cdots < t_k$ and integers $a_1, \ldots, a_k$,
    $$\lim_n\, \mathbb{P}(\mathcal F_n(t_i) \geq a_i, 1 \leq i \leq k) = \det(I - \chi_{[0,a)} \mathcal{K} \chi_{[0,a)})_{\ell^2(\Z \times \{t_1, \ldots, t_k\})},$$
    where $\chi_{[0,a)}(x,t_i) = \ind{0 \leq x < a_i}$ and the kernel
    \begin{align*}
        \mathcal{K}(x,s;y,t) & = - \ind{s>t, x\geq y} \frac{(s-t)^{x-y}}{(x-y)!} + \\
        & \frac{1}{(2 \pi \mathbf{i})^2} \oint_{d+\mathbf{i}\R} dz_1 \oint_{|z_0|=1} dz_0\, \frac{e^{\frac{1}{2}z_1^2-tz_1}}{e^{\frac{1}{2}z_0^2-sz_0}} \, \frac{z_1^y}{z_0^{x+1}} \cdot \frac{1}{z_1-z_0}.
    \end{align*}
    Here, $d> 1$, the vertical line $d+\mathbf{i}\R$ is oriented upwards and the circle $|z_1|=1$ is oriented counter-clockwise.
\end{theorem}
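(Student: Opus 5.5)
The plan is to get an exact finite-$n$ Fredholm determinant for the multi-time probabilities and then take a steepest-descent limit at the critical scale. For fixed $n$, the SJ model at $p=\tfrac12$ is a Schur-type process. Via the relation $L_{1/2}(0,0;m,n) = m - F(m,n)$ and an RSK / Krawtchouk-ensemble description (as in \cite{Joplanc}), the vector $(L(n+k_i,n))_{i}$ with $k_i=\lfloor\sqrt{2n}\,t_i\rfloor$ is the top row of a Schur process. Its specializations are one Bernoulli variable per column (factor $(1+z)/2$) and one geometric-type specialization per row. Equivalently, it is a discrete-time TASEP with sequential update (\cite{DW} Case B).

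The event $\{F(n+k_i,n)\ge a_i\}$ is $\{L(n+k_i,n)\le n+k_i-a_i\}$, the gap probability of the top particle. By the Eynard--Mehta / Borodin--Ferrari formalism this equals $\det(I-\chi K_n\chi)$ on $\ell^2(\Z\times\{t_1,\dots,t_k\})$, after shifting coordinates so that the level $n+k_i-a_i$ corresponds to $x<a_i$. The extended kernel has the usual form. It consists of a transition part $-\ind{s>t}\oint \prod(\text{extra column factors})\,w^{y-x-1}\,dw$, plus a double contour integral whose integrand is
$$\frac{(1+z_1)^{n+k_t}\,(1-z_1)^{-n}\,\cdots}{(1+z_0)^{n+k_s}\,(1-z_0)^{-n}\,\cdots}\cdot\frac{z_1^y}{z_0^{x+1}}\cdot\frac{1}{z_1-z_0}.$$
I would first fix the exact form of these factors by checking the one-time case against the known Krawtchouk kernel. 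I would also choose the coordinate shift so that the relevant variables are $x$ with $0\le x<a$.

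Next comes the asymptotics. Write the integrand as $e^{n f(z)}$ times lower-order terms, with $z$ scaled as $z=\zeta+ w/\sqrt{n}$ (up to a constant) around the critical point. The key point is that $\rho=1$ is exactly where the two saddle points of $f$ merge with the pole at the origin (degenerate critical point). Under the scaling $z\sim w/\sqrt{n}$, the $n$-dependent factors then become $e^{\frac12 w^2}$. The shifts $k_i=\sqrt{2n}\,t_i$ contribute $e^{-tw}$. The powers $z^y$ survive unscaled because $x,y$ stay of order one, which is why the limit lives on $\ell^2(\Z)$ rather than on $\R$. After a gauge conjugation $x\mapsto$ (factor)$^{x}$, which leaves the determinant unchanged, the $z_0$ contour becomes a small circle around the pole at $0$, i.e. $|z_0|=1$ after rescaling. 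The $z_1$ contour becomes the vertical line $d+\mathbf{i}\R$ to the right. The transition part, by the same scaling, becomes $\oint e^{(s-t)w}w^{y-x-1}\,dw/(2\pi\mathbf{i})=(s-t)^{x-y}/(x-y)!$ for $x\ge y$.

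The main obstacle is justifying convergence of the Fredholm determinant. Because $\chi_{[0,a_i)}$ restricts to finitely many points, the determinant is a finite $(\sum a_i)\times(\sum a_i)$ matrix determinant. So pointwise convergence of the kernel entries suffices, and no trace-class tail bounds are needed. Pointwise convergence requires deforming the exact contours into steep-descent contours passing near the degenerate saddle. One also needs a uniform bound showing that the contributions away from an $n^{-1/2}$ neighbourhood are exponentially small. I would try first the contours $|z_0|=r/\sqrt{n}$ and a vertical line $\Re z_1=d/\sqrt{n}$. On these I would verify monotonicity of $\Re f$ along them by an explicit Taylor expansion plus a crude global bound. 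A dominated-convergence argument on the rescaled variables then finishes the proof. Finally, the case $k=1$, $t=0$ must be checked to reproduce Theorem \ref{thm:main}, as a consistency check on the normalizations.
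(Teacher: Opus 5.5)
Your route is essentially the paper's: an exact Borodin--Ferrari determinant at finite $n$, contours on the $n^{-1/2}$ scale at the degenerate critical point $\zeta=0$, removal of the conjugation $(\sqrt{2n})^{x-y}$, and dominated convergence. Using the line $\Re\zeta_1=d/\sqrt{n}$ instead of the paper's circle $|\zeta_1-1|=1-d/\sqrt{2n}$ is a harmless variant. The $\zeta_1$-integrand decays like $|\zeta_1|^{y-2n-m-1}$. The only singularity to the right of the line is at $1$, provided the line lies to the right of the $\zeta_0$-circle. For $\zeta_1=\epsilon+\mathbf{i}v$ one has $|1-\zeta_1^2|^2\ge(1-\epsilon^2)^2+2v^2+v^4$, which gives the global bound directly.

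\textbf{The gap.} It sits in the step you single out as the main obstacle. The exact formula projects onto $\{x<a_i\}$, which is a half-infinite set, and no coordinate shift makes it finite. So the sentence ``because $\chi_{[0,a_i)}$ restricts to finitely many points \ldots\ no trace-class tail bounds are needed'' assumes precisely the reduction that has to be proved.

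This reduction is not automatic for $k\ge 2$. The double-integral part $J_n(x,s;y,t)$ does vanish for $x<0$, since the $\zeta_0$-integrand is then analytic at $0$. The transition part $-\binom{s-t}{x-y}\ind{s>t}$ does not vanish there, so $\chi_aK_n\chi_a$ is non-zero on $\{x<0\}$.

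The paper proves \eqref{eqn:Kbreakdown} by a block argument relative to $\{x<0\}\sqcup\{x\ge 0\}$:
\begin{itemize}
\item The off-diagonal block $\{x<0\}\times\{x\ge 0\}$ vanishes, because $J_n=0$ there and the transition kernel needs $x\ge y$. Hence the determinant factorises.
\item The $\{x<0\}$ diagonal block is supported on $s>t$. It is therefore strictly triangular in time and contributes determinant $1$.
\end{itemize}

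In your Schur-process language you could argue differently. No particle ever occupies $\{x<0\}$; this is $F\ge 0$, i.e.\ $L(m,n)\le m$. So the gap event on $\{x<a_i\}$ equals the gap event on $[0,a_i)$, and its probability is a finite determinant by inclusion--exclusion. You must supply one of these two arguments. Otherwise you would need $n$-uniform decay of $K_n$ as $x\to-\infty$, which you do not have.

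\textbf{The integrand.} The integrand you wrote is wrong in a way that matters. With $(1+z_1)^{n+k_t}(1-z_1)^{-n}$, the logarithmic derivative at $0$ is $2n+k_t\neq 0$. So the origin is not a critical point, and the $n^{-1/2}$ scaling would not produce $e^{z^2/2}$.

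The correct factor (Proposition~\ref{prop:BF}) is $(1+\zeta_1)^{-(n+m_t)}(1-\zeta_1)^{-n}=(1-\zeta_1^2)^{-n}(1+\zeta_1)^{-m_t}$. The evenness of $(1-\zeta^2)^{\pm n}$ is what creates the degenerate critical point. The factors $(1+\zeta_0)^{m_s}$ and $(1+\zeta_1)^{-m_t}$, with $m_s\approx\sqrt{2n}\,s$ and $m_t\approx\sqrt{2n}\,t$, give $e^{sz_0}$ and $e^{-tz_1}$. They must be built into the global bound; for $t<0$ the $\zeta_1$-factor grows along the contour.
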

One can show by arguing as in the proof of Theorem \ref{thm:main} that the Fredholm determinants in Theorem \ref{thm:process} determine a consistent family of probability distributions and, as a result, determine an integer-valued limit process
$$\mathcal F(t) \stackrel{d}{=} \lim_n\, \mathcal F_n(t).$$
The process $\mathcal F(t)$ is non-decreasing as $m \mapsto F(m,n)$ is non-decreasing.
We will establish:
\begin{corollary} \label{cor:extendedkernel}
    The determinant
    $$\det(I - \chi_{[0,a)} \mathcal{K} \chi_{[0,a)})_{\ell^2(\Z \times \{t_1, \ldots, t_k\})} = \det(I - \chi \mathcal{K}^{ext} \chi)_{\ell^2(\Z \times \{t_1, \ldots, t_k\})},$$
    where $\chi(x,t_i) =\chi_{[0,a)}(x,t_i) = \ind{0 \leq x < a_i}$ and
    $$\mathcal{K}^{ext}(x,s;y,t) = \begin{cases}
        \displaystyle \int_0^{\infty}d\lambda\,  \frac{H_{x}(\lambda+s) H_{y}(\lambda+t)}{y!} \frac{e^{-\frac{(\lambda+s)^2}{2}}}{\sqrt{2\pi}} & t \leq s \\[2ex]
        
        \displaystyle -\int_{-\infty}^{0}d\lambda\,  \frac{H_{x}(\lambda+s) H_{y}(\lambda+t)}{y!} \frac{e^{-\frac{(\lambda+s)^2}{2}}}{\sqrt{2\pi}} & t > s
    \end{cases}.$$
\end{corollary}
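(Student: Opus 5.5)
We prove Corollary \ref{cor:extendedkernel} by writing $\mathcal K$ as a conjugate of $\mathcal K^{ext}$ (plus a part that does not change the determinant) and checking the two pieces separately. The tool is the Hermite representation already used for $K_a$:
\[
\frac{1}{2\pi\mathbf{i}}\oint_{|z_0|=1} dz_0\, \frac{e^{-\frac12 z_0^2+(s+\lambda) z_0}}{z_0^{x+1}} = \frac{1}{x!}\Big(\frac{d}{dz}\Big)^x e^{-z^2/2+(s+\lambda)z}\Big|_{z=0} = \frac{e^{-(s+\lambda)^2/2}}{x!}\,\Big(-\frac{d}{dw}\Big)^x e^{-w^2/2}\Big|_{w=-(s+\lambda)}\cdot(\pm1),
\]
which gives, up to the sign $(-1)^x$, $\frac{H_x(s+\lambda)}{x!}$ times an explicit factor. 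Dually,
\[
\frac{1}{2\pi\mathbf{i}}\int_{d+\mathbf{i}\R} dz_1\, e^{\frac12 z_1^2-(t+\lambda)z_1}z_1^y = (-1)^y\frac{H_y(t+\lambda)}{\sqrt{2\pi}}\, e^{-(t+\lambda)^2/2}.
\]
This follows from the Gaussian integral $\frac{1}{2\pi\mathbf{i}}\int e^{z^2/2-wz}dz=\frac{e^{-w^2/2}}{\sqrt{2\pi}}$ after differentiating $y$ times in $w$.

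\textbf{Step 1: separate the two contours.} Since $\Re(z_1-z_0)>0$ on the contours, write
\[
\frac{1}{z_1-z_0}=\int_0^\infty e^{-\lambda(z_1-z_0)}\,d\lambda .
\]
Fubini then factorizes the double contour integral in $\mathcal K$ into
\[
\int_0^\infty d\lambda\, \big[\text{$z_0$-integral with } e^{(s+\lambda)z_0}\big]\big[\text{$z_1$-integral with } e^{-(t+\lambda)z_1}\big].
\]
Applying the two Hermite identities gives
\[
\int_0^\infty d\lambda\,(-1)^{x+y}\,\frac{H_x(\lambda+s)H_y(\lambda+t)}{x!}\cdot\frac{e^{-(\lambda+t)^2/2}}{\sqrt{2\pi}} .
\]
The Gaussian weight lands on the $t$ argument, whereas the statement puts it on the $s$ argument with $1/y!$ in place of $1/x!$. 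I remove this mismatch by a conjugation
\[
\mathcal K\mapsto D\mathcal K D^{-1},\qquad D(x,s)=(-1)^x\, x!\,\varphi(s) \ \text{for a suitable function } \varphi.
\]
Such a conjugation preserves the Fredholm determinant. The precise bookkeeping of the Gaussian factor may instead require a change of variable, or using the relation $H_x(u)e^{-u^2/2}$ between the two forms after swapping the roles of $z_0$ and $z_1$. I will fix these constants first, on the diagonal case $s=t$, where the formula must reduce to $K_a$ transferred to $\ell^2$.

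\textbf{Step 2: absorb the correction term for $t>s$.} After conjugation, the statement's kernel differs from the $\int_0^\infty$ expression by
\[
-\int_{-\infty}^{\infty} d\lambda\, \frac{H_x(\lambda+s)H_y(\lambda+t)}{y!}\,\frac{e^{-(\lambda+s)^2/2}}{\sqrt{2\pi}} \qquad (t>s).
\]
By the Hermite translation formula $H_y(u+h)=\sum_k\binom{y}{k}h^{y-k}H_k(u)$ and orthogonality $\int H_xH_k\,e^{-u^2/2}=\sqrt{2\pi}\,x!\,\delta_{xk}$, this full-line integral equals
\[
\binom{y}{x}\frac{x!}{y!}\,(t-s)^{y-x}=\frac{(t-s)^{y-x}}{(y-x)!}\,\mathbf 1_{\{y\ge x\}} .
\]
This is a Gaussian heat-semigroup kernel. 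I then compare it with the indicator term $-\mathbf 1_{\{s>t,\,x\ge y\}}(s-t)^{x-y}/(x-y)!$ in $\mathcal K$. The two have transposed orderings, so the natural move is to transpose the whole kernel, which also preserves the determinant, and check that $\mathcal K^{T}$ matches $\mathcal K^{ext}$ up to conjugation.

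\textbf{Main obstacle.} The main risk is getting the orientation and sign conventions to line up consistently: which of $s,t$ carries the Gaussian weight, whether the correction appears for $s>t$ or $t>s$, and the factorials $x!$ versus $y!$. My plan is to settle all of these with the transpose-and-conjugate trick, calibrated against the single-time case $k=1$, where the result must agree with \eqref{eqn:JKrelation}.
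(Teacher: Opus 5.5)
Your plan contains a correct proof by a different route from the paper's. As written, though, it also contains one step that fails and two miscomputed identities.

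\textbf{Comparison with the paper.} For $s>t$ the paper shifts the $z_1$-contour from $d+\mathbf{i}\R$ to $-d+\mathbf{i}\R$. The residue at $z_1=z_0$ equals $\frac{(s-t)^{x-y}}{(x-y)!}\ind{x\ge y}$ and cancels the indicator term. The Laplace representation on the shifted contour then gives the $-\int_{-\infty}^0$ formula, and a transposition finishes. You instead keep the original contour for every $(s,t)$. This is legitimate: $\Re(z_1-z_0)\ge d-1>0$ there, so Fubini applies. The double integral is then $\int_0^\infty \frac{H_x(s+\lambda)}{x!}H_y(t+\lambda)w(t+\lambda)\,d\lambda$ throughout, with $w(u)=e^{-u^2/2}/\sqrt{2\pi}$. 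You absorb the indicator term via the Appell/orthogonality identity
\[
\int_{\R}\frac{H_x(u+h)}{x!}H_y(u)\,w(u)\,du=\frac{h^{x-y}}{(x-y)!}\ind{x\ge y},
\]
so that $-\ind{\cdots}+\int_0^\infty=-\int_{-\infty}^0$. This is the paper's residue computation read on the Hermite side. Your version needs no contour deformation, while the paper's produces the intermediate formula \eqref{eqn:Kext} along the way.

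\textbf{What must be fixed.} First, the diagonal conjugation $D(x,s)=(-1)^x x!\,\varphi(s)$ of Step 1 cannot move the Gaussian weight. The ratio $w(t+\lambda)/w(s+\lambda)=e^{-(t-s)\lambda-(t^2-s^2)/2}$ depends on the integration variable $\lambda$, so it is not of the form $\varphi(s)/\varphi(t)$. No diagonal conjugation relates the off-diagonal blocks of $\mathcal{K}$ and $\mathcal{K}^{ext}$. What works is the transposition you reach in Step 2, and it gives an exact identity: $\mathcal{K}^{ext}(x,s;y,t)=\mathcal{K}(y,t;x,s)$ for $x,y\ge 0$, with no conjugation at all. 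Since $\chi$ is diagonal and the matrices are finite, $\det(I-\chi\mathcal{K}\chi)=\det(I-\chi\mathcal{K}^{T}\chi)$. Drop the ``calibration on $k=1$'': a special case cannot fix constants in a general identity, and here there are none to fix.

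Second, your Hermite evaluations are wrong as stated. The $z_0$-integral is exactly $H_x(s+\lambda)/x!$, with no sign and no extra factor; this is the paper's $A(x,\lambda)$ with shifted argument, read off from the generating function. Your intermediate display has $e^{-(s+\lambda)^2/2}$ where $e^{+(s+\lambda)^2/2}$ belongs, which taken literally leaves a spurious Gaussian. The $z_1$-integral is $H_y(t+\lambda)w(t+\lambda)$ with no $(-1)^y$, since $z^y e^{-vz}=(-\partial_v)^y e^{-vz}$. A stray $(-1)^{x+y}$ would be harmless (it is a conjugation by $(-1)^x$), but it is not there.

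With these corrections the argument closes. For $t\le s$, $\mathcal{K}(y,t;x,s)$ is the statement's $\int_0^\infty$ expression. For $t>s$, your Step 2 computation shows that $-\ind{t>s,\,y\ge x}\frac{(t-s)^{y-x}}{(y-x)!}$ equals minus the full-line integral. That leaves $-\int_{-\infty}^0$, which is exactly the statement.
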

Here $H_0, H_1, H_2, \ldots$ are the family of Hermite polynomials as defined above. (See \cite{Suetin} for a reference on Hermite polynomials.)

Arguing as in the proof of Corollary \ref{cor:main}, one can show that
\begin{equation} \label{eqn:GUErelation}
\mathbb{P}(\mathcal F(t) \geq a) = \mathbb{P}(\lambda_{max}(\mathbf{G}_{a \times a}) \leq t),\quad t \in \R, a \in \{1,2,3,\ldots\}.
\end{equation}
One could think of $\mathcal{F}(t)$ as the analogue of the Airy process $\mathcal{A}(t)$ from \eqref{eqn:DNV} in this setting.

A more general version of the process $\mathcal{F}(t)$ is constructed in the forthcoming article \cite{RVV}.
The authors construct the large $n$ scaling limit of the process
$$(x,s;y,t) \mapsto F_{1/2}((ns-\sqrt{n}x)/2,(ns+\sqrt{n}x)/2;(nt-\sqrt{n}y)/2,(nt+\sqrt{n}y)/2),$$
and call it the symmetric Brownian web distance $D^{sBr}(x,s;y,t)$. This is closely related to the Brownian web distance model \cite{VV}.
It is shown in \cite{RVV} that as a process in $t$,
\begin{equation} \label{eqn:RVV} \mathcal F(t) = D^{sBr}(0,0; t,1) = \inf_{s \geq t}\, D^{Br}(0,0;s,1),\end{equation}
where $D^{Br}$ is the Brownian web distance from \cite{VV}.

Johansson's result \eqref{eqn:JoTW} can be recovered by considering the large $t$ limit of $\mathcal F$:
$$ \lim_{t \to \infty}\, \mathbb{P}\left( \frac{(t/2)^2 - \mathcal F(t)}{(t/2)^{2/3}} \leq \xi \right) = F_2(\xi).$$
This may be proved by analysing the Fredholm determinant from Theorem \ref{thm:process} or by \eqref{eqn:GUErelation}.
As $t \to -\infty$, $\mathcal F(t) \stackrel{a.s.}{\to} 0$ as expected, which follows, for example, from \eqref{eqn:RVV}.

We conclude the Introduction with the following natural extension of Corollary \ref{cor:main}\footnote{After discussion with the authors of \cite{RVV} following the completion of this manuscript, we realised that the theorem also follows from their work, albeit with a very different proof, which is interesting.}:
\begin{theorem} \label{thm:extendedcorollary}
    Let $W$ be a $\{1,2,3,\ldots\} \times \{1,2,3,\ldots\}$ random matrix consisting of independent entries $W_{i,j} \sim \mathbf{N}_{\mathbb{C}}(0,1)$
    (standard, complex-valued Normal random variables). Let $\mathbf{G} = \frac{W + W^*}{\sqrt{2}}$ and denote by $\mathbf{G}_{a \times a}$ the top-left
    $a \times a$ submatrix of $\mathbf{G}$. Then for $t_1, \ldots, t_k \in \R$ and $a_1, \ldots, a_k \in \{1,2,3,\ldots\}$,
    $$\mathbb{P}(\mathcal{F}(t_i) \geq a_i, 1 \leq i \leq k) = \mathbb{P}(\lambda_{max}(\mathbf{G}_{a_i \times a_i}) \leq t_i, 1 \leq i \leq k).$$
\end{theorem}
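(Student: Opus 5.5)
Both sides are probabilities of events of the form $\{\,\cdot\,\}$ for a family indexed by $(a_i,t_i)$. Since Theorem \ref{thm:process} and Corollary \ref{cor:extendedkernel} identify the left side as the Fredholm determinant $\det(I-\chi\Jext\chi)$, the plan is to show that the right side is the same Fredholm determinant. To do this, I will identify the eigenvalues of the GUE corners process with a determinantal point process whose extended kernel is $\Jext$ (up to conjugation).

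The key structural fact is that the minor process of $\mathbf{G}$ is determinantal. Let $\lambda^{(m)}_1\ge\cdots\ge\lambda^{(m)}_m$ be the eigenvalues of $\mathbf{G}_{m\times m}$. The configuration $\{(\lambda^{(m)}_j,m)\}$ is the GUE corners process: the levels interlace, and the conditional law of level $m$ given level $m+1$ is uniform on the interlacing set. This process is determinantal, and its correlation kernel (Johansson--Nordenstam, Okounkov--Reshetikhin) is built from Hermite functions. For $m\ge n$ it is
$$K^{\mathrm{GUE}}(m,u;n,v)=\sum_{k\ge 1}\frac{H_{m-k}(u)H_{n-k}(v)}{\sqrt{2\pi}(n-k)!}e^{-u^2/2}\ \text{-type sums},$$
with a step term $-\ind{m<n}\frac{(v-u)^{n-m-1}}{(n-m-1)!}\ind{v>u}$. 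The event $\{\lambda_{\max}(\mathbf{G}_{a_i\times a_i})\le t_i\ \forall i\}$ says that the point process has no points in $\bigcup_i\{a_i\}\times(t_i,\infty)$. Its probability is therefore $\det(I-K^{\mathrm{GUE}})$ on $L^2$ of that set, a space indexed by level $a_i$ and position $\lambda+t_i$ with $\lambda>0$.

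The second step is to convert this into the discrete determinant over $x\in[0,a_i)$ at times $t_i$. I will mimic the route already taken for \eqref{eqn:JKrelation} and Corollary \ref{cor:main}, where $\det(I-\chi_{[0,a)}J\chi_{[0,a)})=\det(I-K_a)_{L^2([0,\infty))}$ was obtained by factoring the kernel. Concretely, I will write
$$\chi\Jext\chi = AB,$$
where $A:L^2(\text{half-lines})\to\ell^2$ has entries $H_x(\lambda+s)e^{-(\lambda+s)^2/2}/\sqrt{2\pi}$ and $B$ has entries $H_y(\lambda+t)/y!$. The sign and half-line choice is to be made according to whether $t\le s$ or $t>s$. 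I will then apply $\det(I-AB)=\det(I-BA)$. Computing $BA$ uses the Christoffel--Darboux / generating-function sum $\sum_{x<a}H_x(u)H_x(v)/x!$, and it should produce the multi-level corners kernel at levels $a_i$ on the sets $(t_i,\infty)$.

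The main obstacle is the $t>s$ case, where the kernel integrates over $(-\infty,0)$ with a minus sign. There one must check that $\int_{\R}$ minus $\int_0^\infty$ reproduces, via Hermite orthogonality $\int H_xH_y e^{-u^2/2}du/\sqrt{2\pi}=x!\delta_{xy}$, exactly the interlacing step term $(v-u)^{n-m-1}/(n-m-1)!$ in $K^{\mathrm{GUE}}$, including in the regime where the ordering of levels $a_i$ differs from the ordering of times $t_i$. If the direct factorization gets messy, my fallback is to prove equality of the generating contour-integral forms. The GUE corners kernel has a double contour representation
$$\oint\oint e^{\frac12 z_1^2-vz_1}e^{-\frac12 z_0^2+uz_0}z_1^{n}z_0^{-m}\frac{dz_1\,dz_0}{z_1-z_0},$$
which matches $\mathcal{K}$ from Theorem \ref{thm:process} after swapping the roles of the discrete variable $x$ and the continuous variable $t$. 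The Fourier/Laplace-type transform in $t\mapsto\lambda+t$ then converts $\chi_{[0,a)}$ in $x$ into the indicator of $(t,\infty)$. Throughout, trace-class justification follows from the Gaussian decay of the factors $e^{\frac12 z_1^2}$ on the vertical line.
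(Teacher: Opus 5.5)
\textbf{There is a genuine gap in your second step.}

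Your first step is sound. The minors of $\mathbf G$ form a determinantal process (Johansson--Nordenstam), and $\{\lambda_{max}(\mathbf G_{a_i\times a_i})\le t_i\ \forall i\}$ is a gap event for it. The gap expansion terminates because at most $\sum_i a_i$ points lie in the region; Gaussian decay is not the justification, since the interlacing term grows polynomially. You should first reduce to distinct levels and distinct times, as the paper does, using that $\mathcal F$ is non-decreasing and that $\lambda_{max}(\mathbf G_{a\times a})$ is non-decreasing in $a$ by Cauchy interlacing. This also shows that the ``mismatched ordering'' regime you worry about is simply redundant.

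The second step is the entire content of the theorem, and you only assert that $BA$ ``should produce'' the corners kernel.

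\emph{The factorization.} As described, $\chi\Jext\chi$ is not of the form $AB$ through a half-line, because the domain and sign of the $\lambda$-integral are set jointly by the row time $s$ and the column time $t$. You can repair this with the intermediate space $\{1,\dots,k\}\times\R$: take $\mathbf A((x,i),(l,\lambda))=[\ind{l\le i,\lambda>0}-\ind{l>i,\lambda<0}]\,\psi_x(\lambda+t_i)$ and $\mathbf B((l,\lambda),(y,j))=\ind{l=j}\varphi_y(\lambda+t_j)$, where $\psi_x=H_xw$ and $\varphi_y=H_y/y!$. But then the $(l,l')$ block of $\mathbf B\mathbf A$ is $\sum_{y<a_l}\varphi_y(\lambda+t_l)\psi_y(\lambda'+t_l)$ times a half-line indicator in $\lambda'$. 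Only single-level GUE kernels appear, with no inter-level Hermite sums and no interlacing term. This is not the corners kernel on $\bigcup_i\{a_i\}\times(t_i,\infty)$, and equating its determinant with the corners gap probability is again the nontrivial claim.

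\emph{The step term.} Writing $-\int_{-\infty}^0=\int_0^\infty-\int_\R$ and using orthogonality with the Appell property $H_y(u+c)=\sum_j\binom{y}{j}H_j(u)c^{y-j}$ produces $\ind{t>s,\,y\ge x}\frac{(t-s)^{y-x}}{(y-x)!}$. That is the Poisson-type step term of $\mathcal K$, in the discrete variables. It is not the Gamma-type term $\frac{(v-u)^{n-m-1}}{(n-m-1)!}\ind{v>u}$ of the corners kernel, in the continuous variables. These are dual objects (counts at fixed times versus jump times at fixed counts), and passing from one to the other is exactly what must be proved.

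\emph{The contour fallback.} It has the same defect: matching kernels after swapping $x$ and $t$ does not identify Fredholm determinants on two different index sets.

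\textbf{How the paper avoids this.} It never uses an operator-level duality.
\begin{enumerate}
\item It reduces to $a_i=i$ and $t_1\le\cdots\le t_k$.
\item It makes a triangular change of basis $\varphi_q(t_j+u)\to u^q/q!$ in the finite $\binom{k+1}{2}$ matrix, then subtracts adjacent column blocks. This collapses $\det(I-\chi\Jext\chi)$ to $\det[w_{j-i+1}(t_i)]_{i,j=1}^k$.
\item Integrating Warren's density $\det[w_{i-j}(x_j)]$ for Brownian last passage percolation shows that $\mathbb P(X_i(1)\le t_i\ \forall i)$ equals the same determinant.
\item Baryshnikov's equality in law $(X_i(1))_i\stackrel{d}{=}(\lambda_{max}(\mathbf G_{i\times i}))_i$ finishes the proof.
\end{enumerate}
To keep your corners-process viewpoint, you would still need to reduce the corners gap probability (with $a_i=i$) to that same $k\times k$ determinant, which is essentially the Warren/Baryshnikov input.
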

A way to interpret this theorem is that $\mathcal{F}(t)$ starts from $\mathcal{F}(-\infty)=0$ and jumps 1 unit at the times $\lambda_{max}(\mathbf{G}_{1\times 1}) < \lambda_{max}(\mathbf{G}_{2 \times 2}) < \cdots.$

\section{Preliminaries} \label{sec:prel}

Let $K$ be an integral kernel acting on the space $L^2(\Omega,\mu)$. The Fredholm determinant of $K$ is
\begin{equation} \label{eqn:Fredholm}
\dt{I-K}_{L^2(\Omega,\mu)} = 1 + \sum_{k=1}^{\infty} \frac{(-1)^k}{k!} \int_{\Omega^K} d\mu(z_1) \cdots d\mu(z_k)\, \dt{K(z_i,z_j)}_{1\leq i,j\leq k.}\end{equation}
If there are functions $f$ and $g$ on $\Omega$ such that
$$ |K(x,y)| \leq f(x) g(y)$$
with $f$ bounded and $g$ integrable (or vice versa), then the series converges absolutely.
Furthermore, suppose a sequence of integral kernels
$K_n$ satisfy $K_n \to K$ pointwise on $\Omega$ and $|K_n(x,y)| \leq f(x) g(y)$ for every $n$ with $f$ bounded and $g$ integrable (or vice versa). Then,
$$ \dt{I-K_n}_{L^2(\Omega,\mu)} \to \dt{I-K}_{L^2(\Omega,\mu)}.$$
See \cite{Josurvey} for proofs of these facts, which are deduced from Hadamard's inequality and the dominated convergence theorem.

A conjugation of an integral kernel $K$ is a kernel $K'$ of the form
$$ K'(x,y) = \frac{c(x)}{c(y)} K(x,y)$$
where $c: \Omega \to \R\setminus{\{0\}}$ is a non-vanishing function.
Fredholm determinants remain invariant under conjugation: $\dt{I-K'} = \dt{I-K}$.
However, bounds of the form $|K(x,y)| \leq f(x)g(y)$ are \emph{not} invariant under conjugation.

To demonstrate that the series expansion of a Fredholm determinant is absolutely convergent, it is often necessary to conjugate the kernel $K$ so that the conjugated kernel $K'$ does satisfy a bound of the form $|K'(x,y)| \leq f(x)g(y)$ with $f$ bounded and $g$ integrable (or vice versa).
For simplicity, it is customary not to include such conjugation factors when presenting a kernel in a theorem. However, in proofs, we do conjugate kernels so that the Fredholm series expansion converges absolutely.

The proofs of Theorem \ref{thm:main} and Theorem \ref{thm:process} are based on a Fredholm determinant formula for the law of the process $t \mapsto F(t,n)$ as follows.
Let $X_{n,1}(t) < X_{n,2}(t) < \cdots < X_{n,n}(t)$ be $n$ non-colliding Bernoulli random walks started from $X_{n,i}(0) = i-1$.
Here, non-colliding means that we condition $n$ independent Bernoulli random walks (with step distribution $\mathrm{Ber}(1/2)$) to never collide via the Doob $h$-transform where $h$ is the Vandermonde; see \cite{KOCR}. Then it is shown in \cite[Theorem 4.6]{KOCR} that the bottom-most walk $t \mapsto X_{n,1}(t)$ has the same law as the process $t \mapsto F(t,n)$.
A formula for the finite-dimensional distributions of $X_{n,1}$ is given in \cite[Display (11)]{DNV}, by way of \cite[Theorem 2.25]{BF} (see also \cite[Proposition 5.5]{BG}).
We quote the formula here.

\begin{proposition} \label{prop:BF}
    Consider the first passage function $F(t,n) = F_{1/2}(0,0;t,n)$ from \eqref{eqn:Lptime}.
    For $1 \leq t_1 < t_2 < \cdots < t_k$ and $n \geq 1$, we have
   $$ \mathbb{P}(F(t_i,n) \geq a_i ) = \det(I- \chi_a K \chi_a)_{\ell^2(\Z \times \{t_1, \ldots, t_k\})}.$$
    Here $\chi_a(x,t_i) = \ind{x < a_i}$ and
    $$K(x,s;y,t) = - H_{s-t}(x,y)\, \ind{s>t} + J(x,s;y,t).$$
    The kernel $H_k$ equals
    $$ H_k(x,y) = \binom{k}{x-y} = \frac{k!}{(x-y)!(k-x+y)!}\ind{0 \leq x-y \leq k}.$$
    The kernel
    $$J(x,s;y,t) =\frac{1}{(2\pi \mathbf{i})^{2}} \oint_{\gamma_0}d\zeta_0 \oint_{\gamma_{1}}d\zeta_1\,
    \frac{(1+\zeta_0)^s(1-\zeta_0)^n\zeta_0^{-x-1}}{(1+\zeta_1)^t(1-\zeta_1)^n\zeta_0^{-y}}\, \frac{1}{\zeta_0-\zeta_1}.$$
    The contour $\gamma_1$ encloses $1$ and $\gamma_{0}$ encloses $0$; they are oriented counter-clockwise and do not intersect.
\end{proposition}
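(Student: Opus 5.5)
The plan is to obtain the formula as a gap probability of a determinantal point process, in three steps. First, by \cite[Theorem 4.6]{KOCR}, the process $t \mapsto F(t,n)$ has the same law as the bottom walk $t \mapsto X_{n,1}(t)$ of $n$ non-colliding $\mathrm{Ber}(1/2)$ walks started from the packed configuration $X_{n,i}(0)=i-1$. Hence
$$\mathbb{P}(F(t_i,n)\ge a_i,\ 1\le i\le k)=\mathbb{P}\big(\text{no particle of } \{X_{n,j}(t_i)\}_j \text{ lies in } (-\infty,a_i),\ 1\le i\le k\big).$$
Since at time $t$ every particle lies in $[0,n-1+t]$, each event only involves finitely many sites. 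Therefore, once we know that the space-time configuration $\{(X_{n,j}(t_i),t_i)\}$ is determinantal with some kernel $K$ on $\Z\times\{t_1,\ldots,t_k\}$, the probability equals $\det(I-\chi_aK\chi_a)$ by inclusion–exclusion. This is a finite determinant, so no convergence issues arise.

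Second, I would establish the determinantal structure through the Eynard–Mehta theorem, in the form of \cite[Theorem 2.25]{BF}. By Karlin–McGregor and the Doob $h$-transform with the Vandermonde $h$, the joint law of the positions at times $t_1<\cdots<t_k$ is a product of determinants of one-step transition kernels $p_{t}(x,y)=2^{-t}\binom{t}{y-x}$. One further determinant encodes the $h$-transform from the packed start; it is a polynomial (Vandermonde) structure in the final-time variables, or equivalently the limit of conditioning to never collide up to a large time horizon. The $2^{-t}$ normalisations are scalar factors and disappear after a conjugation. Eynard–Mehta then gives a kernel of the form
$$-\,\phi_{t,s}(y,x)\ind{s>t}+\sum_{k=1}^{n}\Psi_k^{s}(x)\Phi_k^{t}(y),$$
where $\phi_{t,s}$ is the composed transition over $s-t$ steps. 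This composed transition is $\binom{s-t}{x-y}=H_{s-t}(x,y)$, which is the first term of $K$ in Proposition \ref{prop:BF}.

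Third, I would compute the biorthogonal families explicitly as contour integrals. The transition acts on generating functions by multiplication by $(1+\zeta)$, so it is natural to take
$$\Psi^s_k(x)=\oint_{\gamma_0}\frac{d\zeta_0}{2\pi\mathbf{i}}\,(1+\zeta_0)^s(1-\zeta_0)^{n}\zeta_0^{-x-1}\cdot(\text{power of }\zeta_0\text{ indexed by }k),$$
with $\Phi^t_k(y)$ the dual integrals around $\zeta_1=1$ carrying $(1+\zeta_1)^{-t}(1-\zeta_1)^{-n}\zeta_1^{y}$. The factors $(1-\zeta)^{\pm n}$ come from the packed initial condition, via the Krawtchouk structure of the single-time marginal. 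I would check biorthogonality $\sum_x\Psi_k^{t}(x)\Phi_\ell^{t}(x)=\delta_{k\ell}$ by a residue computation. Then I would sum the finite geometric series in $k$ to obtain $\frac{1}{\zeta_0-\zeta_1}$, using the fact that the contours do not intersect: this extends the sum up to a term with vanishing residue. This yields $J(x,s;y,t)$, with $\zeta_1^{y}$ in the denominator slot (the printed $\zeta_0^{-y}$ there should read $\zeta_1^{-y}$).

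The main obstacle is this third step: choosing the right biorthogonal functions for the packed initial condition, and verifying that the geometric-sum and residue bookkeeping produces exactly $J$ with the stated contours and signs. In practice I would shortcut it by matching with \cite[Display (11)]{DNV} and \cite[Proposition 5.5]{BG}, where this exact computation is carried out.
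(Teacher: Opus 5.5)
Your proposal is correct and follows the paper's route. The paper likewise identifies $t\mapsto F(t,n)$ with the bottom walk $X_{n,1}$ via \cite[Theorem 4.6]{KOCR} and quotes the kernel from \cite[Display (11)]{DNV}, which is obtained through \cite[Theorem 2.25]{BF} (see also \cite[Proposition 5.5]{BG}); you are also right that $\zeta_0^{-y}$ in $J$ should read $\zeta_1^{-y}$.

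If you carry out your third step yourself, the $k$-dependent factor in $\Psi^s_k$ must be a power of $(1-\zeta_0)$, not of $\zeta_0$. That is, expand $\frac{1}{\zeta_0-\zeta_1}$ in powers of $\frac{\zeta_1-1}{\zeta_0-1}$. Then for $k\geq n$ the factor $(\zeta_1-1)^k$ cancels the order-$n$ pole of $(1-\zeta_1)^{-n}$, so those terms vanish. It also keeps each $\Psi^s_k$ in the span of $x\mapsto\binom{s}{x-j}$, $0\le j\le n-1$, as Eynard--Mehta requires for the packed start.
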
 

\section{Proof of Theorem \ref{thm:main}} \label{sec:mainproof}
According to Proposition \ref{prop:BF},
\begin{equation} \label{eqn:mainproof1}
    \mathbb{P}(F(n,n) \geq a) = \det \left( I - \chi_{a} J_n \chi_{a}\right)_{\ell^2(\Z)},
\end{equation}
where $\chi_{a}(x) = \ind{x < a}$ and the kernel $J_n(x,y)$ has the formula
$$J_n(x,y) = \frac{1}{(2\pi \mathbf{i})^{2}} \oint_{\gamma_0}d\zeta_0 \oint_{\gamma_{1}}d\zeta_1\,
    \left(\frac{1-\zeta_0^2}{1-\zeta_1^2}\right)^n \frac{\zeta_1^y}{\zeta_0^{x+1}} \cdot \frac{1}{\zeta_0-\zeta_1}.$$

Note that when $x \leq -1$, $J_n(x,y) =0$ as we can contract the $\zeta_0$-contour to a point. Let $\bar{\chi}_0(x) = \ind{x \geq 0}$.
Thus, $J_n = \bar{\chi}_0 J_n$. Since $\bar{\chi}_0^2 = \bar{\chi}_0$ and $\chi_a \bar{\chi}_0 = \bar{\chi}_0 \chi_a$,
by the cyclic property of the determinant (also known as the Sylvester identity):
$$\det(I - \chi_a J_n \chi_a)_{\ell^2(\Z)} = \det(I - \chi_a \bar{\chi}_0 J_n \chi_a \bar{\chi}_0)_{\ell^2(\Z)} = \det(I - \chi_{[0,a)}J_n \chi_{[0,a)})_{\ell^2(\Z)}.$$
In particular, the r.h.s.~is a finite $a \times a$ determinant when $a > 0$ and equals 1 when $a \leq 0$.
To derive the limit, we thus only need to take the entry-wise limit of the $a \times a$ kernel.

Next, we make explicit choices for the two contours $\gamma_0$ and $\gamma_1$. We set
\begin{align} \label{eqn:gamma0}
    \gamma_0 &= \gamma_0(\sigma_0) = (1/\sqrt{2n}) \cdot e^{\mathbf{i}\sigma_0}, \quad |\sigma_0| \leq \pi.\\ \label{eqn:gamma2}
    \gamma_1 &= \gamma_1(\sigma_1,d) = 1 - 1\Big( 1 - \frac{d}{\sqrt{2n}}\Big)e^{\mathbf{i}\sigma_1/\sqrt{2n}}, \quad |\sigma_1| \leq \pi \sqrt{2n}.
\end{align}
Thus, $\gamma_0$ is a circle around $0$ of radius $1/\sqrt{2n}$ and $\gamma_1$ is a circle centred at 1 of radius $1- d/\sqrt{2n}$.
We assume here that $d > 1$ is fixed.

We make the change of variables $\zeta_i = z_i/\sqrt{2n}$, which gives
$$J_n(x,y) = (\sqrt{2n})^{x-y} \frac{1}{(2\pi \mathbf{i})^{2}} \oint_{\gamma}dz_1 \oint_{|z_0|=1}dz_1\,
    \frac{(1-\frac{z_0^2}{2n})^n}{(1-\frac{z_1^2}{2n})^n}\, \frac{z_1^y}{z_0^{x+1}}\, \frac{1}{z_0-z_1}.$$
The contour $\gamma = \sqrt{2n} \times \gamma_1$. We can remove the conjugation factor $(\sqrt{2n})^{x-y}$ from the kernel without affecting the determinant.

Since the contour $|z_0| = 1$ is free of $n$, we see that $(1-\frac{z_0^2}{2n})^n \to e^{-z_0^2/2}$ as $n \to \infty$, uniformly over the integration contour.
On the other hand, along the contour $\gamma$, if $|z_1| \leq L$ then we find from Taylor's theorem that $z_1 = d - \mathbf{i}\sigma_0 + O_L(n^{-1/2})$ for $|\sigma_1| \leq L$.
Thus, locally, $\gamma$ is the vertical line $d + \mathbf{i}\R$ oriented downwards. When $|z_1| \leq L$, we have that $(1- z_1^2/2n)^n \to e^{-z_1^2/2}$.

In order to get the limit of $J_n$ we have to bound the global behaviour of $(1 - z_1^2/2n)^{-n}$ along the contour $\gamma$, or equivalently, the global behaviour of $(1-\zeta_1^2)^{-n}$ along the contour $\gamma_1$ from \eqref{eqn:gamma2}. We establish the following lemma.

\begin{lemma} \label{lem:mainglobal}
    Suppose $\zeta_1 \in \gamma_1(\sigma_1,d)$ from \eqref{eqn:gamma2}. Then, for all $n \geq n_0 = n_0(d)$ and $|\sigma_1| \leq \pi \sqrt{2n}$,
    $$\left | (1- \zeta_1^2)^{-n}\right| \leq e^{- \frac{1}{5 \pi^2} \sigma_1^2 + d^2}.$$
\end{lemma}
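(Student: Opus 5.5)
We write $\zeta_1 = 1 - r e^{\mathbf{i}\theta}$ with $r = 1 - d/\sqrt{2n}$ and $\theta = \sigma_1/\sqrt{2n} \in [-\pi,\pi]$. Then $1-\zeta_1^2 = (1-\zeta_1)(1+\zeta_1) = r e^{\mathbf{i}\theta}(2 - re^{\mathbf{i}\theta})$, so
$$\left|(1-\zeta_1^2)^{-n}\right| = r^{-n}\,\left|2 - re^{\mathbf{i}\theta}\right|^{-n}, \qquad \left|2-re^{\mathbf{i}\theta}\right|^2 = 4 - 4r\cos\theta + r^2 = (2-r)^2 + 4r(1-\cos\theta).$$
The plan is to bound the radial factor and the angular factor separately, and then combine them.

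\emph{Radial part.} At $\theta=0$ we have $r(2-r) = 1-(1-r)^2 = 1 - d^2/(2n)$. Hence
$$r^{-n}(2-r)^{-n} = \left(1-\tfrac{d^2}{2n}\right)^{-n} \le e^{d^2/2 + O(d^4/n)} \le e^{d^2}$$
for $n \ge n_0(d)$. This produces the $e^{d^2}$ term in the claimed bound.

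\emph{Angular part.} We write
$$\left|2-re^{\mathbf{i}\theta}\right|^2 = (2-r)^2\left(1 + \frac{4r(1-\cos\theta)}{(2-r)^2}\right),$$
so it remains to bound $\bigl(1 + 4r(1-\cos\theta)/(2-r)^2\bigr)^{-n/2}$. For $|\theta|\le\pi$ we use $1-\cos\theta \ge 2\theta^2/\pi^2$. For $n\ge n_0$ we have $r \ge 1/2$, say, and $(2-r)^2 \le (1+d/\sqrt{2n})^2 \le 2$ (it is close to $1$). Together these give $4r(1-\cos\theta)/(2-r)^2 \ge c\,\theta^2$ with $c$ close to $8/\pi^2$ as $n$ grows. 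Call this quantity $u$; it lies in $[0, C]$ for a bounded constant $C$ (at most about $8$).

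Next we use $\log(1+u) \ge \kappa u$ on $[0,C]$, with $\kappa = \log(1+C)/C$. This gives
$$(1+u)^{-n/2} \le e^{-\kappa n u/2} \le e^{-\kappa c\, n\theta^2/2} = e^{-\kappa c\,\sigma_1^2/4},$$
using $n\theta^2 = \sigma_1^2/2$.

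\emph{The main obstacle} is only bookkeeping: we must check that the resulting constant $\kappa c/4$ is at least $1/(5\pi^2)$. A cruder but safe route is as follows. Take $r\ge 1/2$ and $(2-r)^2\le 9/4$, so $u \ge \tfrac{16}{9\pi^2}\theta^2$ and $u\le 16/(9/4)\cdot$(bounded), in fact $u \le 8$ when $(2-r)^2\ge 1$. Then $\kappa \ge \log 9/8 \approx 0.27$, and the exponent is at least $0.27\cdot\tfrac{16}{9\pi^2}\cdot\tfrac{\sigma_1^2}{4} \approx 0.12\,\sigma_1^2/\pi^2$. This falls short of $0.2$, so the estimates must be sharpened.

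To sharpen them, we split into two ranges of $|\theta|$:
\begin{itemize}
\item For small $|\theta|$ (say $|\theta|\le \pi/2$), $u$ is at most about $4$, so $\kappa \ge \log 5/4 \approx 0.4$. Here we also use $r\to1$ and $(2-r)\to 1$, giving $u \ge (8/\pi^2)(1-o(1))\theta^2$. The exponent is then about $0.4\cdot 2\,\sigma_1^2/\pi^2 \ge \sigma_1^2/(5\pi^2)$.
\item For $|\theta|\ge\pi/2$, we have $|2-re^{\mathbf{i}\theta}|^2 \ge (2-r)^2 + 4r$, which is about $5$. So $\left|(1-\zeta_1^2)^{-n}\right| \le (r\sqrt5)^{-n}(1+o(1))$. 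This decays like $e^{-0.3 n}$, which dominates $e^{-\sigma_1^2/(5\pi^2)} \ge e^{-2n/5}\cdots$. We check directly that $\tfrac12\log 5 \cdot n \ge \sigma_1^2/(5\pi^2)$ when $\sigma_1^2 \le 2\pi^2 n$; this holds since $0.8 \ge 0.4$.
\end{itemize}

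Multiplying the radial and angular bounds then yields the lemma for $n\ge n_0(d)$.
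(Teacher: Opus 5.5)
Your proof is correct, and it has the same skeleton as the paper's. Both arguments split $\log|1-\zeta_1^2|$ into two pieces. The first is its value at $\sigma_1=0$, namely $\log(1-d^2/2n)$, which gives the factor $e^{d^2}$. The second is the angular increment $\frac12\log(1+u)$ with $u=4r(1-\cos\theta)/(2-r)^2$, which both arguments bound below using $1-\cos\theta\ge 2\theta^2/\pi^2$.

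You differ only in the inequality used for $\log(1+u)$.
The paper differentiates $g(\sigma)=\Re\log(1-\zeta^2)$, replaces the denominator of $g'$ by its maximum $(1+\alpha)^2$, and integrates. This is equivalent to $\log(1+u)\ge u/(1+u_{\max})$ with $u_{\max}=8r/(2-r)^2\to 8$. It gives the constant $\alpha/(1+\alpha)^2\to 2/9$ in front of $\sigma_1^2/\pi^2$, only just above $1/5$.

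Your chord bound $\log(1+u)\ge u\log(1+C)/C$ is sharper. With $C=8$ and keeping $r/(2-r)^2\to 1$, it gives $\frac{\log 9}{4}\cdot\frac{r}{(2-r)^2}\to\frac{\log 9}{4}\approx 0.55$. So your first pass already proves the lemma on all of $|\theta|\le\pi$. The shortfall to $0.12$ came only from the lossy bounds $r\ge\frac12$ and $(2-r)^2\le\frac94$, and the split at $|\theta|=\pi/2$ is unnecessary.

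If you keep the split, fix two harmless slips in the range $|\theta|\ge\pi/2$:
\begin{enumerate}
\item The factor $\bigl(5/(4+r^2)\bigr)^{n/2}$ is $e^{O(d\sqrt n)}$, not $1+o(1)$.
\item The decay rate of $(r\sqrt5)^{-n}$ is $\frac12\log 5\approx 0.80$, not $0.3$. With rate $0.3$, your comparison with $e^{-2n/5}$ would fail.
\end{enumerate}
Since only rate $0.4$ is needed, the margin of order $n$ absorbs the $e^{O(d\sqrt n)}$ factor. More cleanly, keep the radial factor separate and note that there $u\ge 4r/(2-r)^2\to 4$. Hence $\frac n2\log(1+u)\ge \frac{2n}{5}\ge\frac{\sigma_1^2}{5\pi^2}$ for $n\ge n_0(d)$.
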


\begin{proof}
    Let $f(\zeta) = \log(1-\zeta^2)$ and $g(\sigma) = \Re(f(\zeta(\sigma)))$ for $\zeta = \zeta(\sigma) \in \gamma_1(\sigma,d)$.
    It suffices to show that $g(\sigma) \geq \sigma^2/(5 \pi^2 n) - d^2/n$ for $n \geq n_0(d)$.

    Since $g(\sigma) = g(-\sigma)$, we may assume that $0 \leq \sigma \leq \pi \sqrt{2n}$. We find that
    $$g(\sigma) = \log \left| 1 + \zeta\right| + \log \left | 1 - \zeta \right| = \log \left(2 \left | 1 - \alpha e^{\mathbf{i}\beta \sigma}\right|\right) + \log\left (1-\frac{d}{\sqrt{2n}}\right),$$
    where $\alpha = \frac{1}{2}(1 - d/\sqrt{2n})$ and $\beta = 1/\sqrt{2n}$. As a result, the derivative
    $$ g'(\sigma) = \frac{\alpha \beta \sin(\beta \sigma) }{(1-\alpha)^2 + 4\alpha \sin^2(\beta \sigma/2)}.$$
    
    Note that $0 < (1-\alpha)^2 + 4\alpha \sin^2(\beta \sigma/2) \leq (1-\alpha)^2 + 4\alpha = (1+\alpha)^2$.
    This gives
    \begin{equation} \label{eqn:gderivative}
    g'(\sigma) \geq \frac{ \alpha \beta}{(1+\alpha)^2}\, \sin(\beta \sigma), \quad 0 \leq \sigma \leq \pi \sqrt{2n}.
    \end{equation}
    Integrating \eqref{eqn:gderivative} and using the cosine double-angle formula gives
    $$ g(\sigma) - g(0) \geq \frac{2\alpha}{(1+\alpha)^2}\cdot \sin^2(\beta \sigma/2) \geq \frac{2\alpha}{(1+\alpha)^2} \left( \frac{\beta \sigma}{\pi}\right)^2,$$
    since $\sin(t) \geq (2/\pi)t$ for $0 \leq t \leq \pi/2$.
    
    If $n \geq n_0(d)$, then $\frac{\alpha}{(1+\alpha)^2} \geq \frac{1}{9}$. As a result, the bound above implies that
    $$ g(\sigma) - g(0) \geq \frac{\sigma^2}{5 \pi^2 n}.$$
    Finally, $g(0) = \log(1-d^2/2n) \geq -d^2/n$ for $n \geq n_0(d)$. This proves the required bound.
\end{proof}

Due to Lemma \ref{lem:mainglobal} and the dominated convergence theorem, in the limit, all contributions to $J_n(x,y)$ will come from the local contribution.
Along the chosen contours, we have that $|z_0|^{x+1} = 1$ and $|z_1| \leq 2$ so that $|z_1|^y \leq 2^{(a)_{+}}$ for all $0 \leq y < a$.
Also $|z_1-z_0| \geq d-1 > 0$. By Lemma \ref{lem:mainglobal},
$$\left | \Big ( \frac{1- z_0^2/2n}{1-z_1^2/2n}\Big)^n \right| \leq C e^{- \kappa \sigma_1^2}$$
for $(z_1,z_0) \in \gamma \times \{|z_0|=1\}$ and some positive constants $C, \kappa$.
Therefore,
$$ \left|\text{integrand of}\; J_n(x,y)\right| \leq C \, \frac{2^{(a)_{+}}}{d-1}\, e^{-\kappa \sigma_1^2}$$
for $(z_1,z_0) \in \gamma \times \{|z_0|=1\}$ and all $0 \leq x,y < a$.
If we orient the contour $\gamma$ clockwise, we pick up a minus sign, and we get by the dominated convergence theorem that
$$J_n(x,y) \to J(x,y)$$
as $n \to \infty$ for every $(x,y) \in [0,a)^2$.
As our Fredholm determinant is an $a \times a$ matrix determinant, we deduce that
$$\det(I - \chi_{a} J_n \chi_{a})_{\ell^2(\Z)} \to \det(I - \chi_{[0,a)}J\chi_{[0,a)})_{\ell^2(\Z)}.$$

Finally, we show that $a \mapsto 1 - \det(I - \chi_{[0,a)}J\chi_{[0,a)})_{\ell^2(\Z)}$ is a valid distribution function.
Note that $a \mapsto \det(I - \chi_{[0, a)}J\chi_{[0, a)})_{\ell^2(\Z)}$ is non-increasing because it is a limit of the non-increasing functions $\mathbb{P}(F(n,n) \geq a)$.
Also, $\det(I - \chi_{[0, a)}J\chi_{[0, a)})_{\ell^2(\Z)} = 1$ for every $a \leq 0$.
So we only need to show that $\det(I - \chi_{[0, a)}J\chi_{[0, a)})_{\ell^2(\Z)} \to 0$ as $a \to \infty$. To do so,
we will first establish Corollary \ref{cor:main} and deduce this as a consequence.

For integers $x,y \geq 0$ and a real number $\lambda \geq 0$, define the kernels
\begin{align*}
    A(x,\lambda) &=  \frac{1}{2\pi \mathbf{i}} \oint_{|z|=1r}dz\, e^{-\frac{1}{2}z^2+\lambda z} z^{-(x+1)}, \\
    B(\lambda,y) &=  \frac{1}{2\pi \mathbf{i}} \oint_{d+\mathbf{i}\R}dz\, e^{\frac{1}{2}z^2-\lambda z} z^{y}.
\end{align*}
Note that $\Re(z_1-z_0) > 0$ in the integrand of $J(x,y)$.
By writing $(z_1-z_0)^{-1} = \int_0^{\infty}d\lambda\, e^{-\lambda(z_1-z_0)}$, we see that $J(x,y) = AB(x,y)$ for every $x,y \geq 0$.

Note that $$ B(\lambda,y) = \left ( - \frac{d}{d\lambda}\right)^{y} \left [ \frac{1}{2\pi \mathbf{i}} \oint_{\mathbf{i}\R}dz\, e^{\frac{1}{2}z^2-\lambda z} \right].$$
We can evaluate the Gaussian integral, which equals
$$\frac{1}{2\pi \mathbf{i}} \oint_{\mathbf{i}\R}dz\, e^{\frac{1}{2}z^2-\lambda z} = \frac{e^{-\frac{\lambda^2}{2}}}{\sqrt{2 \pi}}.$$
As such, $$B(\lambda,y) = H_{y}(\lambda) \times \frac{e^{-\frac{\lambda^2}{2}}}{\sqrt{2 \pi}}$$
where $H_0,H_1,H_2,\ldots$ are the family of Hermite polynomials.

The generating function of the Hermite polynomials is
$$ e^{-\frac{z^2}{2} + \lambda z} = \sum_{n=0}^{\infty} H_n(\lambda) \frac{z^n}{n!}.$$
Consequently,
$$A(x,\lambda) = \frac{H_{x}(\lambda)}{x!}.$$

So for $x,y \geq 0$,
$$J(x,y) = \int_0^{\infty}d\lambda \, H_{y}(\lambda) \frac{H_{x}(\lambda)}{x!} \frac{e^{-\lambda^2/2}}{\sqrt{2\pi}}.$$
The Hermite polynomials are orthogonal for the density $w(\lambda) = e^{-\lambda^2/2}/\sqrt{2\pi}$ and satisfies \cite{Suetin}
$$\int_{-\infty}^{\infty}d\lambda \, H_{y}(\lambda) \frac{H_{x}(\lambda)}{x!} \frac{e^{-\lambda^2/2}}{\sqrt{2\pi}} = \ind{x=y} = I(x,y).$$
Therefore, for $x,y \geq 0$,
\begin{equation} \label{eqn:I-J}
    (I-J)(x,y) = \int_{-\infty}^0d\lambda\, H_{y}(\lambda) \frac{H_{x}(\lambda)}{x!} \frac{e^{-\lambda^2/2}}{\sqrt{2\pi}} =: M(x,y).
\end{equation}
Equation \eqref{eqn:I-J} implies that for every integer $a \geq 1$,
$$\det(I-\chi_{[0,a)} J \chi_{[0,a)})_{\ell^2(\Z)} = \det(M(x,y))_{0 \leq x,y \leq a-1}.$$

By the Cauchy-Binet identity,
\begin{align*}
    \det(M(x,y))_{0 \leq x,y \leq a-1} &= \frac{1}{a!} \int_{(-\infty,0]^a}d\lambda_0 \cdots d\lambda_{a-1}\\
    & \det(H_i(\lambda_j))_{0 \leq i,j \leq a-1} \det\left (\frac{H_i(\lambda_j)}{i!} \frac{e^{-\lambda_j^2/2}}{\sqrt{2\pi}}\right )_{0 \leq i,j \leq a-1}\\
    &= \frac{(2\pi)^{-a/2}}{\prod_{i=0}^a i!} \int_{(-\infty,0]^a}d\lambda_0 \cdots d\lambda_{a-1} \, \det(H_i(\lambda_j))^2 e^{-\frac{1}{2} \sum_{j=0}^{a-1}\lambda_j^2}\\
    & = \frac{(2\pi)^{-a/2}}{\prod_{i=1}^a i!} \int_{(-\infty,0]^a}d\lambda_0 \cdots d\lambda_{a-1} \,
    \Delta(\lambda_0,\ldots,\lambda_{a-1})^2 e^{-\frac{1}{2} \sum_{j=0}^{a-1}\lambda_j^2}.
\end{align*}
In the last line, we used the fact that $H_0, H_1, \ldots, H_{a-1}$ are monic polynomials with corresponding degrees $0, 1,\ldots, a-1$ to deduce that
$\det(H_i(\lambda_j))$ equals the Vandermonde determinant $$\Delta(\lambda_0,\ldots,\lambda_{a-1}) = \prod_{0\leq i<j\leq a-1} (\lambda_j-\lambda_i).$$

We recognise the function
$$(\lambda_0, \ldots, \lambda_{a-1}) \mapsto \frac{(2\pi)^{-a/2}}{\prod_{i=1}^a i!} \Delta(\lambda_0,\ldots,\lambda_{a-1})^2 e^{-\frac{1}{2} \sum_{j=0}^{a-1}\lambda_j^2}$$
as the joint density of the eigenvalues of the GUE random matrix $\mathbf{G}_{a \times a}$ \cite{For, Wg}. As a result,
$$ \det(I-\chi_{(-a,0]} J \chi_{(-a,0]})_{\ell^2(\Z)} = \det(M(x,y))_{0 \leq x,y \leq a-1} = \mathbb{P}(\lambda_{max}(\mathbf{G}_{a \times a})\leq 0),$$
which proves the corollary.

Wigner's semi-circle law for GUE \cite{For, Wg} dictates that $\lambda_{max}(\mathbf{G}_{a \times a})/ 2 \sqrt{a} \stackrel{p}{\to} 1$ as $a \to \infty$,
from which one readily deduces that $\det(I - \chi_{[0,a)}J\chi_{[0,a)})_{\ell^2(\Z)} \to 0$ as $a \to \infty$.
This completes the proof of Theorem \ref{thm:main}.

Finally, since $J = A B$, by the cyclic property of the Fredholm determinant:
$$ \det(I - \chi_{[0,a)} J \chi_{[0,a)})_{\ell^2(\Z)} = \det(I - B \chi_{[0,a)} A)_{L^2([0,\infty)])} = \det(I - K_a)_{L^2[0,\infty)},$$
where the kernel
\begin{align*}
K_a(u,v) & = \sum_{0 \leq x < a} B(u,x)A(x,v) \\
& = \frac{1}{(2\pi \mathbf{i})^2}\oint_{d+\mathbf{i}\R}dz_1 \oint_{|z_0|=1}dz_0\, e^{\frac{1}{2}(z_1^2-z_0^2)+uz_0-vz_1} \big(\frac{z_1}{z_0}\big)^a \frac{1}{z_1-z_0}.
\end{align*}

\section{Proof of Theorem \ref{thm:process}}
According to Proposition \ref{prop:BF}, for integers $-n<m_1<\cdots < m_k$,
$$\mathbb{P}(F(n+m_i,n) \geq a_i, 1 \leq i \leq k) = \det(I - \chi_a K_n \chi_a)_{\ell^2(\Z \times \{m_1, \ldots, m_k\})}$$
where $\chi_a(x,m_i) = \ind{x < a_i}$ and $K_n(x,s;y,t) = - H_{s-t}(x,y) \, \ind{s>t} + J_n(x,s;y,t)$ with
\begin{align*}
    J_n(x,s;y,t) &= \frac{1}{(2\pi \mathbf{i})^2} \oint_{\gamma_0} d\zeta_0 \oint_{\gamma_{1}}d\zeta_1\, \big(\frac{1-\zeta^2_0}{1-\zeta^2_1}\big)^n
    \frac{(1+\zeta_0)^{s}}{(1+\zeta_1)^{t}} \frac{\zeta_1^y}{\zeta_0^{x+1}} \frac{1}{\zeta_0-\zeta_1}.
\end{align*}

Let $\chi_{[0,a)}(t_i,x) = \ind{0 \leq x < a_i}$. We shall first show that
\begin{equation} \label{eqn:Kbreakdown}
    \det(I -\chi_a K_n\chi_a)_{\ell^2(\Z \times \{t_1, \ldots, t_k\})} = \det(I -\chi_{[0,a)} K_n\chi_{[0,a)})_{\ell^2(\Z \times \{t_1, \ldots, t_k\})}.
\end{equation}
Note that the r.h.s.~is the determinant of a finite matrix.

Write $\bar{\chi}_0(x,t_i) = \ind{x \geq 0}$. Decompose $K_n = H + J$ where $H(x,s;y,t) = - \ind{s>t,x \geq y}H_{s-t}(x,y)$ (note that $H=0$ unless $x \geq y$).
Then $J(x,s;y,t) = 0$ whenever $x < 0$ because then we can contract the $\zeta_0$-contour to a point. So $J = \bar{\chi}_0 J$.
Now decompose $\chi_a = \chi_1 + \chi_2$ where $\chi_1 = \chi_a \chi_0$ and $\chi_2 = \chi_a \bar{\chi}_0$. Observe that
$\chi_1 H \chi_2(x,y) = H(x,y) \ind{x\geq y, x < 0, y \geq 0} = 0$. Therefore, the kernel $\chi_a K_n \chi_a$ admits a block decomposition
according to the partition $I = \chi_0 + \bar{\chi}_0$ of the space $\ell^2(\Z \times \{t_1,\cdots, t_k\})$:
$$ \chi_a K_n \chi_a = \begin{bmatrix}
    H & 0 \\
    H+J & H+J
\end{bmatrix}.$$
On the $(1,1)$-block above, the kernel $H$ is block triangular with 0s on the diagonal blocks since $H(x,s;y,t)=0$ unless $s > t$. Therefore,
$$\det(I - \chi_a K_n\chi_a) =\det(I - \chi_2 (H+J)\chi_2) = \det(I - \chi_2 K_n \chi_2),$$
which proves \eqref{eqn:Kbreakdown}.

Next, we make explicit choices of contours. We choose the contour $\gamma_0$ as in \eqref{eqn:gamma0} and the contour $\gamma_1$ as in \eqref{eqn:gamma2}.
Along these contours, we change variables $\zeta_i \mapsto z_i/\sqrt{2n}$. Then with $\gamma = \sqrt{2n} \, \gamma_1$,
we find that
$$ - H_{s-t}(x,y)\, \ind{s>t} + \mathcal{J}_n(x,s;y,t) = (\sqrt{2n})^{x-y} \times \mathcal{K}_n(x,s;y,t)$$
where
\begin{align*}
    \mathcal{K}_n(x,s;y,t) &= - \ind{s> t, x \geq y} \frac{((s-t)/\sqrt{2n})^{x-y}}{(x-y)!} \times (1 + O_{x,y}((s-t)^{-1}))\, + \\
    & \frac{1}{(2\pi \mathbf{i})^2} \oint_{\gamma} dz_1 \oint_{|z_0|=1}dz_0\, 
    \left ( \frac{1-\frac{z_0^2}{2n}}{1-\frac{z_1^2}{2n}}\right)^n \frac{(1+\frac{z_0}{\sqrt{2n}})^s}{(1+\frac{z_1}{\sqrt{2n}})^t}\frac{z_1^{y}}{z_0^{x+1}}\, \frac{1}{z_0-z_1}.
\end{align*}
We can remove the conjugation factor $(\sqrt{2n})^{x-y}$ from the kernel.

Now we set $m_i = \lfloor \sqrt{2n} \, t_i \rfloor$ where $t_i$ are as in the statement of Theorem \ref{thm:process}.
We get that
$$\mathbb{P}(\mathcal F_n(t_i) \geq a_i, 1 \leq i \ \leq k) = \det(I - \chi_a \mathcal{K}'_n\chi_a)_{\ell^2(\Z \times \{t_1,\ldots,t_k\})}$$
where $\mathcal{K}'_n(x,s;y,t) = \mathcal{K}_n(x,\lfloor s\sqrt{2n} \rfloor ; y, \lfloor t\sqrt{2n} \rfloor)$.

Arguing as in the proof of Theorem \ref{thm:main}, one sees that
\begin{align*}
    &\frac{1}{(2\pi \mathbf{i})^2} \oint_{\gamma} dz_1 \oint_{|z_0|=1}dz_0\, 
    \left ( \frac{1-\frac{z_0^2}{2n}}{1-\frac{z_1^2}{2n}}\right)^n \frac{(1+\frac{z_0}{\sqrt{2n}})^{\lfloor \sqrt{2n}s\rfloor}}{(1+\frac{z_1}{\sqrt{2n}})^{\lfloor\sqrt{2n}t\rfloor}}\,
    \frac{z_1^{y}}{z_0^{x+1}}\, \frac{1}{z_0-z_1} \to \\
    & \frac{1}{(2\pi \mathbf{i})^2} \oint_{d+\mathbf{i}\R} dz_1 \oint_{|z_0|=1}dz_0\, \frac{e^{\frac{1}{2}z_1^2-tz_1}}{e^{\frac{1}{2}z_0^2-sz_0}}
    \frac{z_1^{y}}{z_0^{x+1}}\, \frac{1}{z_1-z_0}.
\end{align*}
We omit the details here for brevity. This completes the proof of Theorem \ref{thm:process}.

We now turn to the proof of Corollary \ref{cor:extendedkernel}.

Suppose $s > t$. In the definition of $\mathcal{K}(x,s;y,t)$, in the double contour integral, we shift the $z_1$-contour from the vertical line $d+\mathbf{i}\R$ to $-d + \mathbf{i}\R$. We do not encounter any poles at $z_1=0$ because \eqref{eqn:Kbreakdown} ensures $y \geq 0$.
In doing so, we obtain a residue at $z_0 = z_1$, which provides a total contribution of
$$\frac{1}{(2\pi\mathbf{i})} \oint_{|z|=1} dz\, e^{(s-t)z}z^{y-x-1} = \frac{(s-t)^{x-y}}{(x-y)!}\ind{x \geq y}.$$
So it cancels the other term, and we conclude that
\begin{equation} \label{eqn:Kext}
\mathcal{K}(x,s;y,t) = \begin{cases}
    \frac{1}{(2\pi \mathbf{i})^2} \oint_{d+\mathbf{i}\R} dz_1 \oint_{|z_0|=1}dz_0\, \frac{e^{\frac{1}{2}z_1^2-tz_1}}{e^{\frac{1}{2}z_0^2-sz_0}}
    \frac{z_1^{y}}{z_0^{x+1}}\, \frac{1}{z_1-z_0} & s \leq t\\[2ex]
    \frac{1}{(2\pi \mathbf{i})^2} \oint_{-d+\mathbf{i}\R} dz_1 \oint_{|z_0|=1}dz_0\, \frac{e^{\frac{1}{2}z_1^2-tz_1}}{e^{\frac{1}{2}z_0^2-sz_0}}
    \frac{z_1^{y}}{z_0^{x+1}}\, \frac{1}{z_1-z_0} & s > t
\end{cases}.
\end{equation}

Finally, we can represent $\mathcal K$ in terms of Hermite polynomials using \eqref{eqn:Kext}.
Suppose $x,y \geq 0$ and $s \leq t$. By writing $(z_1-z_0)^{-1} = \int_0^{\infty} e^{-\lambda(z_1-z_0)}$
and arguing as in the proof of Corollary \ref{cor:main}, we readily see that
$$\mathcal{K}(x,s;y,t) = \int_0^{\infty}d\lambda\, \frac{H_{x}(s+\lambda)H_{y}(t+\lambda)}{x!}\, \frac{e^{-\frac{(t+\lambda)^2}{2}}}{\sqrt{2\pi}}.$$
When $s > t$, we write $(z_0-z_1)^{-1} = \int_{-\infty}^0 e^{\lambda(z_0-z_1)}$, noting that in this case $\Re(z_0-z_1) > 0$.
Arguing as before using \eqref{eqn:Kext}, we arrive at
$$\mathcal{K}(x,s;y,t) = - \int_{-\infty}^0 d\lambda\, \frac{H_{x}(s+\lambda) H_{y}(t+\lambda)}{x!}\, \frac{e^{-\frac{(t+\lambda)^2}{2}}}{\sqrt{2\pi}}.$$
Upon transposing the kernel, we get the one given in the form of Corollary \ref{cor:extendedkernel}.

\section{Proof of Theorem \ref{thm:extendedcorollary}} \label{sec:SJGUErelation}
Firstly, we note that it is enough to establish the theorem for times $t_1 \leq t_2 \leq \cdots \leq t_k$ and $a_1=1,a_2=2,\ldots,a_k=k$.
Indeed, we may assume w.l.o.g.~that $t_1 \leq t_2 \leq \cdots \leq t_k$. We note that $\mathcal F(t)$ is non-decreasing and, also,
$\lambda_{max}(\mathbf G_{a \times a})$ is non-decreasing in $a$ by Cauchy's interlacing theorem for eigenvalues of Hermitian matrices and their minors.
Therefore, if $a_{i+1} \leq a_i$, then the $(i+1)$-th constraints ($\mathcal{F}(t_{i+1}) \geq a_{i+1}$ and $\mathbf{G}_{a_{i+1}\times a_{i+1}} \leq t_{i+1}$) are redundant.
So after dropping redundant constraints we may assume that $a_1 < a_2 < \cdots < a_k$.
Let $K = a_k$ and observe that, due to $\lambda_{max}(\mathbf G_{a \times a})$ being non-decreasing in $a$,
\begin{align*}
    & \mathbb{P}(\lambda_{max}(\mathbf G_{a_i \times a_i}) \leq t_i\, \forall i) =\\
& \mathbb{P}(\lambda_{max}(\mathbf G_{1 \times 1})\leq t_1, \ldots, \lambda_{max}(\mathbf G_{a_1 \times a_1}) \leq t_1, \lambda_{max}(\mathbf G_{a_1+1 \times a_1+1}) \leq t_2, \ldots \lambda_{max}(\mathbf G_{a_2 \times a_2})\leq t_2, \\
& \quad \ldots, \lambda_{max}(\mathbf G_{a_{k-1}+1 \times a_{k-1}+1}) \leq t_k, \ldots, \lambda_{max}(\mathbf G_{K \times K})\leq t_k).
\end{align*}
Thus, we may assume that $a_i=i$ and $t_1 \leq t_2 \leq \cdots \leq t_k$.

We note that the formula from Theorem \ref{thm:process} continues to hold if we assume that the $t_i$s are weakly ordered; we assumed strict ordering since one can always reduce to that case. Alternatively, we can prove the theorem for strictly ordered $t_i$s and $a_i=i$, and then take limits in the $t_i$s to get equality for weakly ordered $t_i$s.

Write
$$w(x)=\frac{e^{-x^2/2}}{\sqrt{2\pi}}.$$
The Hermite polynomials then satisfy
$$H_n(x)w(x)=(-1)^n w^{(n)}(x)$$
where $w^{(n)}$ is the $n$-fold derivative of $w$.

For $n\in\mathbb Z$, define
\begin{equation} \label{eqn:Fn}
w_n(t)=
\begin{cases}
\displaystyle
\int_{-\infty}^{t} dx\, \frac{(t-x)^{n-1}}{(n-1)!}\,w(x), & n\ge1,\\[2ex]
\displaystyle
w^{(-n)}(t), & n\le0.
\end{cases}
\end{equation}
Then $w_n'(t)=w_{n-1}(t)$ for every $n\in\mathbb Z$.

\begin{proposition} \label{prop:SJGUE}
With the notation above,
$$
\det(I-\chi\Jext\chi) = \det\bigl[w_{j-i+1}(t_i)\bigr]_{i,j=1}^k.
$$
\end{proposition}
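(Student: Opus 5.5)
The plan is a direct finite-matrix computation, using only the Hermite form of $\Jext$ from Corollary \ref{cor:extendedkernel} and the functions $w_n$ of \eqref{eqn:Fn}. Throughout I take $a_i = i$ and $t_1 \leq \cdots \leq t_k$, as arranged above. Then $\chi\Jext\chi$ is a matrix indexed by pairs $(x,i)$ with $0 \leq x \leq i-1$, $1 \leq i \leq k$, of total size $k(k+1)/2$. I treat the case $t_i = t_j$ for $i \neq j$ as in the branch $t\le s$ of the corollary, or I assume strict ordering and pass to the limit at the end.

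\textbf{Step 1: rewrite $I-\chi\Jext\chi$.} By integration by parts, using $H_x w = (-1)^x w^{(x)}$ and $H_y' = yH_{y-1}$, I would check the Appell-type identity
$$\int_{\R} d\lambda\, \frac{H_x(\lambda+s)H_y(\lambda+t)}{y!}\,w(\lambda+s) = \frac{(t-s)^{y-x}}{(y-x)!}\ind{y\geq x} =: \phi_{s,t}(x,y).$$
For $s=t$ this is the orthogonality relation. Write $M(x,i;y,j)=\int_{-\infty}^0 d\lambda\, H_x(\lambda+t_i)H_y(\lambda+t_j)w(\lambda+t_i)/y!$. The identity then gives
$$(I-\chi\Jext\chi)(x,i;y,j) = M(x,i;y,j) - \ind{j<i}\,\phi_{t_i,t_j}(x,y).$$
This holds in all three cases: $j=i$ (via orthogonality, as in \eqref{eqn:I-J}), $j<i$ (split $\int_0^\infty = \int_{\R}-\int_{-\infty}^0$), and $j>i$ (directly from the second branch of $\Jext$).

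\textbf{Step 2: evaluate the entries as $w_n$'s.} Within each block I apply unitriangular row and column operations, which leave the determinant unchanged. On rows, I pass from $(-1)^x w^{(x)}(\lambda+t_i) = (-1)^x w_{-x}(\lambda+t_i)$ to suitable combinations. On columns, I replace $H_y(\mu)/y!$ by $\mu^y/y!$, which is legitimate because $H_y$ is monic. After this, repeated integration by parts in $\lambda$ over $(-\infty,0]$, using $w_n'=w_{n-1}$, writes each entry of $M$ as a finite combination of $w_m(t_i)$ with polynomial coefficients in $t_j - t_i$. The entries involving only $w_{\le 0}$ and boundary terms should then match, and partly cancel, the transition entries $\phi_{t_i,t_j}$.

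\textbf{Step 3: reduce to a $k\times k$ determinant.} The goal is a block elimination, level by level, that reduces the matrix to the $k \times k$ matrix $[w_{j-i+1}(t_i)]$. Concretely, I expect the rows $x=0,\dots,i-2$ at level $i$ to be expressible, up to the transition kernel $\phi$, through the rows of level $i-1$. This uses the semigroup property $\phi_{s,u}\phi_{u,t}=\phi_{s,t}$ and the fact that $\phi$ maps $w_n(\cdot+t_j)$-type columns to $w_n(\cdot+t_i)$-type columns. Subtracting those combinations should leave, at each level, all but one row equal to a unit vector, so that the matrix becomes unit triangular in those directions. Expanding along the unit vectors would then leave one surviving row per level, namely $x=i-1$ at level $i$, paired with one column per level. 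In the $j$-th surviving column the exponent $y$ of $(\lambda+t_j)^y/y!$ should combine with the $(i-1)$ derivatives from $H_{i-1}$ to produce $\int_{-\infty}^{t_i}(t_i-u)^{j-i}w(u)\,du/(j-i)! = w_{j-i+1}(t_i)$ when $j\ge i$, and $w^{(i-j-1)}(t_i)$ when $j<i$. This matches the claimed $\det[w_{j-i+1}(t_i)]$.

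\textbf{Main obstacle.} The hardest part is the bookkeeping in Step 3: choosing the surviving column per level and checking that the strictly lower-triangular $\phi$-terms cancel exactly, rather than merely contributing a triangular factor. If direct elimination becomes unwieldy, my fallback is to expand $\det(M-\Phi)$, with $\Phi$ strictly block lower triangular, by Cauchy--Binet in the $k(k+1)/2$ variables $\lambda$. I would then recognise the resulting integrand as a Gelfand--Tsetlin (interlacing) density and integrate out the interlacing variables one level at a time. Each such integration is an antiderivative, which raises the index of $w_n$ by one; this is where the shifted indices $j-i+1$ come from.
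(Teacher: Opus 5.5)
Your Step 1 is correct and matches the paper's starting point. The full-line identity turns the entries into $M-\1_{\{j<i\}}\phi_{t_i,t_j}$, which is the paper's $A_{ij}(p,q)-\1_{\{i>j\}}B_{ij}(p,q)$. For ties, only your second option (strict ordering plus a limit) is valid. Putting a tied pair with $i<j$ into the $t\le s$ branch makes those entries $M-\phi_{t_i,t_i}=M-\delta$, which contradicts your own formula; for $k=2$, $t_1=t_2=0$ the determinant then equals $-1/\pi$.

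The genuine gap is Step 3, which carries the entire content of the proposition, and the mechanism you propose there does not work. The semigroup identity $\phi_{s,u}\phi_{u,t}=\phi_{s,t}$ is a full-line statement. The $M$-part of a row at level $i$, however, is a half-line integral against $w(\lambda+t_i)$. No finite combination of level-$(i-1)$ rows, which carry $w(\lambda+t_{i-1})$, reproduces it.

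This is visible already for $k=2$. In the columns $(1,0)$ and $(2,0)$, row $(1,0)$ reads $(w_1(t_1),w_1(t_1))$, row $(2,1)$ reads $(-w(t_2),-w(t_2))$, and row $(2,0)$ reads $(w_1(t_2)-1,w_1(t_2))$. So column $(2,0)$ minus column $(1,0)$ is exactly a unit vector. By contrast, row $(2,0)$ minus any multiple of row $(1,0)$ is not a unit vector for generic $t_1<t_2$, as the column $(2,1)$ entry shows. The degeneracy lives between columns of adjacent blocks, not between rows. Your fallback is only a sketch. Cauchy--Binet does not apply directly, because the integration range depends on the pair of blocks: $\lambda<0$ when $j\ge i$ and $\lambda>0$ when $j<i$.

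The missing idea is to make the column functions independent of the block. Write $\psi_p=H_p w$ and $\varphi_q=H_q/q!$. Replace $\varphi_q(\lambda+t_j)$ by $\lambda^q/q!$ rather than $(\lambda+t_j)^q/q!$; this is still a unitriangular change, since $H_q$ is monic.

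The entry in row $(i,p)$, column $(j,q)$ then becomes $A_i(p,q)-\1_{\{i>j\}}B_i(p,q)$. Here $A_i$ and $B_i$ are the integrals of $\psi_p(t_i+\lambda)\lambda^q/q!$ over $(-\infty,0]$ and over $\R$ respectively, so $j$ enters only through the indicator. Working from $j=k$ down to $j=2$, the differences $C_{j,q}-C_{j-1,q}$ for $q\le j-2$ equal $\1_{\{i=j\}}B_j(p,q)$. Integration by parts gives $B_j(p,q)=0$ for $p>q$ and $B_j(p,p)=1$.

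These columns therefore sit on rows $(j,0),\dots,(j,j-2)$ in unitriangular form and can be stripped off. What remains is exactly the columns $(j,j-1)$ and rows $(i,i-1)$ that you predicted. Since $B_i(i-1,j-1)=0$ for $i>j$, the surviving entries are $\int_{-\infty}^0\psi_{i-1}(t_i+\lambda)\lambda^{j-1}/(j-1)!\,d\lambda=(-1)^{j-i}w_{j-i+1}(t_i)$.

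With your shifted basis, the surviving entries would also pick up boundary terms $t_j^m w^{(\cdot)}(t_i)$ from $\lambda=0$. So the entry-wise identity at the end of Step 3 fails as stated. That $t_j$-dependence is precisely what blocks the elimination.
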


\begin{proof}
Set
$$ \psi_p(x)=H_p(x)w(x), \qquad \varphi_q(x)=\frac{H_q(x)}{q!}.$$
The Hermite orthogonality relation reads
$$\int_{\R}\psi_p(x)\varphi_q(x)\,dx= \ind{p=q}.$$

Observe that $I-\chi\Jext\chi$ is a finite matrix. The matrix is indexed by pairs
$$(i,p),\qquad 1\le i\le k,\quad 0\le p<i.$$
Thus, it is a $k \times k$ block matrix where block $(i,j)$ is indexed by $[(i,p), (j,q)]$ for $0 \leq p < i$ and $0 \leq q < j$.
The dimension of the matrix is $1+2+\cdots+k = \binom{k+1}{2}$.

We rewrite its entries in a uniform manner. Define
$$A_{ij}(p,q) = \int_{-\infty}^{0} \psi_p(t_i+u)\varphi_q(t_j+u)\,du,$$
and
$$B_{ij}(p,q) = \int_{\R} \psi_p(t_i+u)\varphi_q(t_j+u)\,du.$$
Using the two cases in the definition of $\Jext$, together with Hermite orthogonality when $i=j$, one gets
\begin{equation}
\label{eq:matrix-entry}
M_{(i,p),(j,q)} = A_{ij}(p,q)-\1_{\{i>j\}}B_{ij}(p,q),
\end{equation}
where $M$ denotes the finite matrix of $I-\chi\Jext\chi$.

We now simplify the columns. For each fixed $j$, the polynomials
$$\varphi_0(t_j+u),\ldots,\varphi_{j-1}(t_j+u)$$
have degrees $0,\ldots,j-1$, with leading coefficients
$$\frac1{0!},\ \frac1{1!},\ \frac1{2!},\ldots,\frac1{(j-1)!}.$$
Therefore, the change of basis
$$\varphi_q(t_j+u) \longleftrightarrow \frac{u^q}{q!},\qquad 0\le q<j,
$$
is triangular with diagonal entries equal to $1$. Performing this change of basis in each column block does not change the determinant.

After this change of basis, define
$$ A_i(p,q) = \int_{-\infty}^{0} \psi_p(t_i+u)\frac{u^q}{q!}\,du, $$
and
$$ B_i(p,q) = \int_{\R} \psi_p(t_i+u)\frac{u^q}{q!}\,du.$$
The new matrix, which is denoted $\widehat M$, has entries
\begin{equation}
\label{eq:Mhat}
\widehat M_{(i,p),(j,q)} = A_i(p,q)-\1_{\{i>j\}}B_i(p,q).
\end{equation}
The important point is that $A_i(p,q)$ and $B_i(p,q)$ no longer depend on the column index $j$.

We next record a simple fact about $B_i(p,q)$. Since
$$  \psi_p(t_i+u)=(-1)^p\frac{d^p}{du^p}w(t_i+u),$$
integration by parts gives
$$ B_i(p,q) = \int_{\R} du\, w(t_i+u) \frac{d^p}{du^p}\Bigl(\frac{u^q}{q!}\Bigr).$$
Hence
\begin{equation}
\label{eq:Btriangular}
B_i(p,q)=0\quad\text{if }p>q, \qquad B_i(p,p)=1.
\end{equation}
Thus the matrix $[B_i(p,q)]_{0\le p,q\le m}$ is upper triangular with diagonal entries equal to $1$.

We now eliminate repeated columns. Let $C_{j,q}$ denote the column of $\widehat M$ indexed by $(j,q)$. Starting with $j=k$ and proceeding downwards to $j=2$, perform the column operations
$$ C_{j,q}\longleftarrow C_{j,q}-C_{j-1,q}, \qquad 0\le q\le j-2.
$$
Since we proceed in decreasing order of $j$, the column $C_{j-1,q}$ has not yet been modified.

From \eqref{eq:Mhat}, the new column has entries
\begin{align*}
&\widehat M_{(i,p),(j,q)} - \widehat M_{(i,p),(j-1,q)} \\
&\qquad = \bigl(\1_{\{i>j-1\}}-\1_{\{i>j\}}\bigr)B_i(p,q) \\
&\qquad = \1_{\{i=j\}}B_j(p,q).
\end{align*}

Thus this column is supported only on the rows belonging to the $j$th block. Moreover, by \eqref{eq:Btriangular}, the columns with $0\le q\le j-2$ involve only the rows $(j,p)$ with $0\le p\le j-2$, and on those rows they form an upper triangular matrix with diagonal entries equal to $1$.
Consequently, these rows and columns contribute a determinant equal to $1$ and may be removed without changing the determinant of $\widehat M$. In more detail, after permuting columns and the corresponding rows of $\widehat M$ (which does not change the determinant), it takes the form
$$ \widehat M = \begin{bmatrix} D' & 0 \\ * & D\end{bmatrix}, \qquad \det[D']=1.$$
So $\det[\widehat M] = \det[D]$.

After this elimination has been carried out, the only surviving column in the $j$th block is $(j,j-1)$.
Hence the original determinant has been reduced to a $k\times k$ determinant of a matrix $D$ as above.

The surviving columns were never altered. Therefore, the reduced matrix has entries
$$
D_{ij} =
A_i(i-1,j-1) - \1_{\{i>j\}}B_i(i-1,j-1).$$
If $i>j$, \eqref{eq:Btriangular} gives $B_i(i-1,j-1)=0$.
Thus,
\begin{equation}
\label{eq:Dmoment}
D_{ij} = \int_{-\infty}^{0} du\, H_{i-1}(t_i+u)w(t_i+u) \frac{u^{j-1}}{(j-1)!}.
\end{equation}
We have therefore proved that
$$ \det(I-\chi\Jext\chi)=\det[D_{ij}]_{i,j=1}^k. $$

It remains to identify $D_{ij}$ with $w_{j-i+1}(t_i)$ up to a conjugation by signs.
For $j\ge1$, changing variables $x=t+u$ in the definition of $w_j$ gives
$$ w_j(t) = \int_{-\infty}^{0} du\, \frac{(-u)^{j-1}}{(j-1)!} w(t+u).$$
Differentiate this identity $i-1$ times with respect to $t$. Since
$$ H_{i-1}(x)w(x)=(-1)^{i-1}w^{(i-1)}(x),$$
we obtain
\begin{align*}
D_{ij} &= (-1)^{i+j-2}w_j^{(i-1)}(t_i) \\
&= (-1)^{i+j-2} w_{j-i+1}(t_i) \\
&= (-1)^{j-i}w_{j-i+1}(t_i).
\end{align*}
Thus,
$$\det[D_{i,j}]_{i,j=1}^k = \det\bigl[w_{j-i+1}(t_i)\bigr]_{i,j=1}^k,$$
as claimed.
\end{proof}

In order to complete the proof, we need to introduce the model of Brownian last passage percolation.
Let $B_1(s), B_2(s), \ldots$ be independent, standard Brownian motions. Brownian last passage percolation
is a sequence of processes $X_1(s) \leq X_2(s) \leq X_3(s) \leq \cdots$ for $s \geq 0$. It is defined according to
\begin{equation} \label{eqn:BLPP}
    X_k(s) = \sup_{0=s_0 \leq s_1 \leq s_2 \leq \cdots \leq s_k = s}\, \sum_{i=1}^k B_i(s_i) - B_i(s_{i-1}).
\end{equation}
Geometrically, $X_1 = B_1$, $X_2$ is the reflection (in the Skorokhod sense) of $B_2$ off $X_1$, $X_3$ is the reflection of $B_3$ off $X_2$, and so on.

Warren \cite[Proposition 8]{Warren} proves that $(X_1,\ldots, X_k)$ is a Markov process with an explicit transition kernel.
In particular, at time $s=1$, for every $x_1 \leq x_2 \leq \cdots \leq x_k \in \R$,
\begin{equation} \label{eqn:warren}
\mathbb{P}(X_1(1) \in dx_1, X_2(1) \in dx_2, \ldots, X_k(1) \in dx_k) = \det[w_{i-j}(x_j)]_{i,j=1}^k\, dx_1 \cdots dx_k.
\end{equation}
Recall $w_n$ from \eqref{eqn:Fn}; it satisfies $\int_{-\infty}^tdx\, w_n(x) = w_{n+1}(t)$ for every $n \in \Z$.
Suppose $t_1 \leq t_2 \leq \cdots \leq t_k$. By integrating \eqref{eqn:warren} over the appropriate domain and using basic properties of the determinant, we find that
\begin{equation} \label{eqn:BLPPcdf}
   \mathbb{P}(X_1(1) \leq t_1, X_2(1) \leq t_2, \ldots, X_k(1) \leq t_k) = \det[w_{i-j+1}(t_j)]_{i,j=1}^k.
\end{equation}
Note that
$$\det[w_{i-j+1}(t_j)]_{i,j=1}^k = \det[w_{j-i+1}(t_i)]_{i,j=1}^k.$$
Proposition \ref{prop:SJGUE} together with the above and Theorem \ref{thm:process} (together with Corollary \ref{cor:extendedkernel}) imply
\begin{equation} \label{eqn:preGUE}
    \mathbb{P}(\mathcal{F}(t_1) \geq 1, \ldots, \mathcal{F}(t_k) \geq k) = \mathbb{P}(X_1(1) \leq t_1, \ldots, X_k(1) \leq t_k).
\end{equation}
Baryshnikov \cite[Theorem 0.7]{Bar} proves that the vector $(X_1(1), X_2(1), \ldots)$ has the same law as the vector
$(\lambda_{max}(\mathbf G_{1 \times 1}), \lambda_{max}(\mathbf G_{2 \times 2}), \ldots)$. Therefore, \eqref{eqn:preGUE} establishes Theorem \ref{thm:extendedcorollary}.

\section*{Acknowledgements}
I thank Julian Ransford for a discussion that led me to consider the subject of this article.
Thanks also to B\'alint Vet\H{o} for clarifying the relationship between the Sepp\"al\"ainen-Johansson model
and the Brownian web distance, and sending me an advance copy of \cite{RVV}.

\end{document}